\renewcommand\labelenumi{(\roman{enumi})}
\renewcommand\theenumi\labelenumi
\newcounter{relctr} %
\everydisplay\expandafter{\the\everydisplay\setcounter{relctr}{0}} %
\newtheorem{theorem}{Theorem}%
\newtheorem{lemma}[theorem]{Lemma}
\newtheorem{corollary}{Corollary}[theorem]
\newtheorem{proposition}[theorem]{Proposition}
\newtheorem{remark}{Remark}[section]
\newcommand{\abs}[1]{\left\lvert{#1}\right\rvert}
\newcommand{\N}{\mathbb{N}}
\newcommand{\R}{\mathbb{R}}
\newcommand{\E}{\mathbb{E}}
\newcommand{\beq}{\begin{eqnarray*}}
\newcommand{\eeq}{\end{eqnarray*}}
\newcommand{\beqn}{\begin{eqnarray}}
\newcommand{\eeqn}{\end{eqnarray}}
\newcommand{\eps}{\varepsilon}
\newcommand{\mathe}{\mathrm{e}}
\newcommand{\hide}[1]{}
\newcommand{\set}[1]{\left\{ #1 \right\}}
\newcommand{\PR}[2][]{\mathop{\mathbb{P}}_{#1}\left( #2 \right)}
\renewcommand{\P}{\mathbb{P}}
\newcommand{\inv}{^{-1}}
\DeclareMathOperator*{\argmax}{arg\,max}
\DeclareMathOperator{\Bin}{Bin}
\newcommand{\paren}[1]{\left( #1 \right)}
\newcommand{\sqprn}[1]{\left[ #1 \right]}
\newcommand{\nrm}[1]{\left\Vert #1 \right\Vert}
\newcommand{\vertiii}[1]{{\left\vert\kern-0.25ex\left\vert\kern-0.25ex\left\vert #1 
    \right\vert\kern-0.25ex\right\vert\kern-0.25ex\right\vert}}
\def\longto{\mathop{\longrightarrow}\limits}
\newcommand{\ninf}{\longto_{n\to\infty}}
\newcommand{\blind}{1}
\begin{document}

\def\spacingset#1{\renewcommand{\baselinestretch}%
{#1}\small\normalsize} \spacingset{1}

\if1\blind
{
  \title{\bf Distribution Estimation under the Infinity Norm}
  \author{Aryeh Kontorovich \\
    Ben Gurion University, Israel\\
    and \\
    Amichai Painsky\\
    Tel Aviv University, Israel}
  \maketitle
} \fi

\bigskip

\begin{abstract}
We present novel bounds for
estimating discrete probability distributions under the
$\ell_\infty$ norm.
These are nearly optimal in various precise senses,
including a kind of instance-optimality.
Our data-dependent convergence guarantees for the maximum likelihood estimator
significantly improve upon 
the
currently known results. 
A variety of techniques are utilized
and innovated upon, including
Chernoff-type inequalities and empirical Bernstein bounds. 
We illustrate our results in synthetic and real-world experiments. Finally, we apply our proposed framework to a basic selective inference problem, where we estimate the most frequent probabilities in a sample. 
\end{abstract}

\noindent%
{\it Keywords:}  Distribution Estimation, Probability Estimation, Infinity Norm
\vfill

\newpage
\spacingset{1.5} %

\section{Introduction}\label{Introduction}

Consider a probability distribution $p$ over 
$\N=\set{1,2,\ldots}$.
Let $X^n$ be a sample of $n$ independent observations from $p$.  In this work we study the basic problem of estimating $p$ from $X^n$. We focus our attention to the infinity norm, which is formally defined in (\ref{infinity norm}). 
Also known as the 
{\em uniform} or {\em supremum} norm,
this popular metric over distributions
has a number of important applications
---
in addition to being a fundamental object
of independent interest
\citep{boucheron2003concentration,van2014probability}. %
Among the applications is
a selective inference scheme for multinomial proportions, as 
discussed below.

Our reference point is the following simple and classic
bound,
whose proof 
is an easy consequence of McDiarmid's inequality:
for all $\delta\in(0,1)$,
\beqn
\label{eq:baseline}
\sup_{i\in\mathbb{N}} |p_i - \hat{p}_i(X^n)| 
\lesssim
\sqrt{
\frac{\log(1/\delta)}{n}
}
\eeqn
holds with probability at least $1-\delta$,
where
$\hat{p}_i(X^n)$
is the 
maximum likelihood estimator (MLE) 
defined below and $\lesssim$ hides small
absolute constants.
This rate is known to be tight in the worst case (Lemma \ref{lem:bern-np}),
but can certainly be improved upon for benign distributions. For example, when
$p=(1-\theta,\theta)$ is the Bernoulli distribution, Bernstein's inequality
\citep[Corollary 2.11]{boucheron2003concentration}
yields
\beqn
\label{eq:bern}
|\theta-\hat\theta|
\lesssim
\sqrt{\frac{\theta(1-\theta)}
{n}\log\frac1\delta
}
+
\frac1n\log\frac1\delta,
\eeqn
where $\hat\theta$ is the MLE.
Furthermore,
\eqref{eq:bern} has an
{\em empirical Bernstein}
version \citep[Lemma 5]{DBLP:conf/icml/DasguptaH08}, 
in which the unknown
quantity $\theta$
is replaced 
in the right-hand side
by the empirically
computable $\hat\theta$.

Drawing inspiration from
\eqref{eq:bern} and its empirical version,
we might expect something like
\beqn
\label{eq:dream}
\sup_{i\in\mathbb{N}} 
|p_i - \hat{p}_i(X^n)| 
\stackrel{?}{\lesssim}
\sqrt{\frac{
v^*
\log\frac1\delta}{n}
}
+
\frac1n\log\frac1\delta,
\eeqn
where $v^*=
\max_{i\in\mathbb{N}}p_i(1-p_i)
$,
or, even more ambitiously,
some version of
\eqref{eq:dream}
with $v^*$ replaced by its
empirical version $\hat{v}^*$.
It will turn out that \eqref{eq:dream}
is
too optimistic.
Absent an oracle that tells us the index of the largest mass, %
some additional cost must be incurred for estimating
many symbol probabilities simultaneously.

\paragraph{Our contributions.}
Our main result amounts
to nearly achieving the 
ultimate
goal.
{First, we derive a Chernoff-type upper bound in Thereom \ref{T_data_independent}, which improves upon \eqref{eq:baseline}. Theorem \ref{main_result_1} introduces its data-dependent counterpart, which demonstrates a significant improvement in small sample regimes.}
Next, we establish in Theorem~\ref{thm:aryeh-ub}
a version of \eqref{eq:dream}
where $v^*$ is replaced 
by $v^*\log\frac1{v^*}$
and provide even sharper bounds therein. 
These are matched by nearly optimal lower bounds,
in distinct senses made precise below.
Finally, 
we apply our results to the
important 
problem of
selective inference. Specifically, we study the basic problem of inferring the most frequent events in a sample and achieve a significant improvement over currently known schemes.

\section{Definitions and Problem Statement}\label{Definitions}
Consider a probability distribution $p$ over $\mathbb{N}$, which 
induces the random variable $X\sim p$. 
The {\em support size}
of $X$, $||p||_0$, 
is also the {\em alphabet size}
--- and unless stated otherwise, our results
hold even when these are infinite.
Let
$X^n = (X_1,...,X_n)$
be a sample consisting of $n$
independent copies of $X$.
Let $c_i(X^n)$ be the count (number of appearances) of the $i^{th}$ symbol in the sample.
Let $\hat{p}(X^n)$ be the maximum likelihood estimator (MLE) of $p$;
namely,
$\hat{p}_i(X^n)=c_i(X^n)/n$  for every $i\in \mathbb{N}$. In this work, we study 
empirical 
distribution
estimation of $p$ under the infinity norm. That is, given a prescribed $\delta>0$, we seek a random variable $T_\delta(X^n)$ such that  
\begin{align}\label{infinity norm}
||p-\hat{p}(X^n)||_\infty\triangleq 
\sup_{i\in\mathbb{N}} |p_i - \hat{p}_i(X^n)| 
\leq T_\delta(X^n)
\end{align}
with probability of at least $1-\delta$.
In light of \eqref{eq:baseline},
we also require that, for fixed $\delta$,
$T_\delta(X^n)\to0$
as $n\to\infty$, in some appropriate sense.

Notice that (\ref{infinity norm}) may also be viewed as a multinomial inference scheme. Specifically, (\ref{infinity norm}) implies that $p_i \in [\hat{p}_i \pm T(X^n)]$ simultaneously for all $i \in \mathbb{N}$. This \textit{confidence region} (CR) defines a hypercube around the MLE, which covers $p$ with a \textit{confidence level} of $1-\delta$.

\section{Related Work}\label{Related Work}
Discrete probability estimation is a fundamental problem in many fields. It is extensively studied under a variety of merits such as total variation  \citep{jiao2017maximum,CohenKW20}, KL divergence \citep{orlitsky2015competitive}, Hellinger distance \citep{hellinger1909neue} Wasserstein metric \citep{kantorovich1960mathematical}, Kolmogorov-Smirnov distance \citep{smirnov1948table} and others. The interested reader is referred to \citep{rice2006mathematical,painsky2019bregman,painsky2023quality} for a comprehensive discussion. In this work we focus on the infinity norm. 
Here, the baseline is the bound 
implicit in
\eqref{eq:baseline},
where, in the language of
\eqref{infinity norm},
$T_\delta(X^n)=
\sqrt{{1}/{n}}+\sqrt{{\log(1/\delta)}/{2n}}
$.

The infinity norm is difficult to analyze in the general case (\ref{infinity norm}). In fact, it is later shown (Section \ref{Main Results}) that (\ref{eq:baseline}) is only asymptotically 
tight and only for the worst-case distribution,
and can be significantly improved in a limited sample regime. On the other hand, the binomial case, $||p||_0=2$, is fairly understood 
as far as minimax optimal and fully empirical
bounds.
In particular, if $Y \sim \text{Bin}(n,\theta)$ is a Binomial random variable 
and
$\hat{\theta}=Y/n$ its MLE,
then
\citet{bousquet2003introduction} and later \citet{dasgupta2008hierarchical} showed that
\begin{align}\label{binomial1}
    |\theta-\hat{\theta}|\leq \sqrt{\frac{5\hat{\theta}(1-\hat{\theta})}{n}\log\frac{2}{\delta}}+\frac{5}{n}\log \frac{2}{\delta}
\end{align}
with probability of at least $1-\delta$. A closely related line of work appears in the statistics literature. Let $F(y;n,\theta)$ be the cumulative distribution function of $Y$. For a given $y$ and $n$, let $\theta_{l}$ and $\theta_{u}$ be the solutions (with respect to $\theta$) of $F(y;n,\theta)=\delta/2$ and $F(y;n,\theta)=1-\delta/2$ respectively. \citet{clopper1934use} showed that 
\begin{align}\label{CP}
    \P\left(\theta\in[\theta_l,\theta_u]\right)\geq 1-\delta
\end{align}
for every $\theta\in[0,1]$. The interval $[\theta_l,\theta_u]$ is widely known as the \textit{exact} Clopper-Pearson (CP) confidence interval (CI). The exact notion refers to the fact that (\ref{CP}) holds for every $n$, as opposed to alternative approximations. In fact, CP is also known to be shortest possible CI, for most setups of interest. Specifically, let $\mathcal{T}$ be a collection of intervals $[t_l,t_u]$ that satisfy $\P\left(\theta\in[t_l,t_u]\right)\geq 1-\delta$, for every  $[t_l,t_u] \in \mathcal{T}$.
The shortest CI for $\theta$ is defined as the intersection of all intervals in $\mathcal{T}$. This notion also implies minimal expected length and minimal false coverage probability, uniformly. \citet{wang2006smallest} showed 
that for $n\geq \log(\delta/2)/\log 0.5$, the CP CI is the shortest. Notice that for a nominal level of $\delta=0.05$, this condition corresponds to $n\geq 6$. Hence, for practical setups of interest, the CP interval $[\theta_{l},\theta_{u}]$ is the shortest possible CI for $\theta$. Unfortunately, CP does not hold a closed-form expression. Yet, \citet{thulin2014cost} showed that for every $y\in\{1,...,n-1\}$, 
\begin{align}\nonumber
&\theta_l=\hat{\theta}-z_{\frac{\delta}{2}}\sqrt{\frac{\hat{\theta}(1-\hat{\theta})}{n}}+\frac{1}{3n}\left(\left(1-2\hat{\theta}\right)z_{\delta/2}^2-1-\hat{\theta}\right)\\\nonumber
&\theta_u=\hat{\theta}+z_{\frac{\delta}{2}}\sqrt{\frac{\hat{\theta}(1-\hat{\theta})}{n}}+\frac{1}{3n}\left(\left(1-2\hat{\theta}\right)z_{\delta/2}^2+2-\hat{\theta}\right)
\end{align}
up to additive terms of order $n^{-3/2}$, where  $z_\delta/2$ is the upper $\delta/2$ quantile of the standard normal distribution. This result implies that for every $y \in \{1,...,n-1\}$, the shortest possible CI length for $\theta$ is  
\begin{align}\label{exact_binomial}
\theta_u-\theta_l =2z_{\delta/2}\sqrt{{\hat{\theta}(1-\hat{\theta})}/{n}}+{1}/{n}+O(n^{-3/2}).
\end{align}
Moreover, we have 
\begin{align}\label{binomial_exact_2}
   |\theta-\hat{\theta}|\leq \max\{|\hat{\theta}-\theta_l|,|\hat{\theta}-\theta_u|\} 
\end{align}
with probability of at least $1-\delta$. Importantly, it can be shown that $z_{\delta/2}$ behaves asymptotically like $\sqrt{2\log(2/\delta)}$.   Comparing (\ref{eq:baseline}) to (\ref{binomial_exact_2}) (and (\ref{binomial1})) , we observe that its sample complexity, $1/\sqrt{n}$, is tight. However, there may still be room for improvement by utilizing a data-dependent scheme.

The Clopper-Pearson interval (\ref{binomial_exact_2}) provides a tight solution for the binomial case $||p||_0=2$. Yet, the problem becomes more involved in the multinomial setting (\ref{infinity norm}). Currently known methods focus on two basic regimes. The first considers an asymptotic setup, where $n$ is much greater than the alphabet size \citep{quesenberry1964large,goodman1964simultaneous,sison1995simultaneous}. The second addresses the case where both $n$ and  $||p||_0$  are small \citep{chafai2009confidence,malloy2020optimal}. Notice that while some of these methods provide rectangular CR \citep{quesenberry1964large,goodman1964simultaneous,painsky2023large,doi:10.1080/10485252.2024.2313706}, others focus on hyper-cubes \citep{sison1995simultaneous}.
Yet, all of these methods assume a finite alphabet where performance guarantees are limited to relatively small $||p||_0$. To the best of our knowledge, no method considers the case where $||p||_0$ may be infinite.

\section{Main Results}\label{Main Results}

We begin our analysis by considering a data-indepedent bound under the infinity norm. Our proposed bound generalizes (\ref{eq:baseline}) by utilizing a Chernoff-like concentration bound.
\begin{theorem}\label{T_data_independent}
     Let $p=p_{i\in \mathbb{N}}$ be a distribution over $\mathbb{N}$. Let $X^n$ be a sample of $n$ independent observations from $p$. Let $\hat{p}(X^n)$ be the MLE of $p$. Then, with probability $1-\delta$,
     \begin{align}\label{Data Independent T1}
            \nrm{p-\hat p}_\infty=&\sup_{i  \in \mathbb{N}} |p_i-\hat{p}_i(X^n)|\leq \frac{1}{n} \left(\frac{1}{\delta^{1/m}}\right) \sum_{k=1}^{m/2}\left( k^{m-k}n^k\sum_{i\in\mathcal{X}} p_i^k(1-p_i)^k\right)^{1/m}\leq\\\nonumber
            &\frac{1}{\sqrt{n}}\frac{\sqrt{m/2}}{\delta^{1/m}}\exp\left(-\frac{1}{2}+\frac{1}{m}\right)+O\left(\frac{1}{n^{\frac{1}{2}+\frac{1}{m}}}\right)
        \end{align}
        for every even $m>0$.
\end{theorem}
Theorem \ref{T_data_independent} relies Markov's inequality for higher-order moments of the infinity norm. In addition, it applies higher-order properties of the MLE and the binomial distribution. The detailed proof appears in Section \ref{proof1}. Next, similarly to Chernoff inequality, we minimize (\ref{Data Independent T1}) with respect to $m$ to obtain tighter convergence guarantees. Specifically, we minimize the leading term of (\ref{Data Independent T1}) to obtain
    \begin{align}\label{13}
        \min_{m \in \mathbb{R}^+} \frac{\sqrt{m/2}}{\delta^{1/m}}\exp\left(-\frac{1}{2}+\frac{1}{m}\right) = \sqrt{1+\log\left(\frac{1}{\delta}\right)}
    \end{align}
    for the choice $m^*=2\log(1/\delta)+2$ (See Appendix C). Hence the infimum of the bound is given by
    \begin{align}\label{Data Independent T2}
           \sqrt{\frac{1+\log\left({1}/{\delta}\right)}{n}}+O\left(\frac{1}{n^{\frac{1}{2}+\frac{1}{m^*}}}\right).
    \end{align}
    Unfortunately, this is not a great improvement over the benchmark \eqref{eq:baseline}. However, it is  shown in Section \ref{proof1}  that as  $m$ increases, the second inequality in 
    (\ref{Data Independent T1})
    becomes tight for a worst-case case distribution $p=[1/2,1/2,0,\dots,0]$. This distribution is quite ``unlikely'' in a large alphabet regime. On the other hand, if we assume a ``more likely'' uniform distribution over a finite alphabet size $A:=||p||_0<\infty $, we obtain 
    \begin{align}%
            \nrm{p-\hat p}_\infty\leq \frac{1}{\sqrt{n}}\frac{\sqrt{m/2}}{\delta^{1/m}}A^{-\frac{1}{2}+\frac{1}{m}}+O\left(\frac{1}{n^{\frac{1}{2}+\frac{1}{m}}}\right)\nonumber
        \end{align}
    for every even $m>0$. Minimizing the leading term with respect to $m$ yields
    \begin{align}
        \min_{m \in \mathbb{R}^+} \frac{\sqrt{m/2}}{\delta^{1/m}}&A^{-\frac{1}{2}+\frac{1}{m}} = \sqrt{\log\left(\frac{A}{\delta}\right)}\exp\left( \frac{1}{2}-\frac{1}{2}\log(A)\right),
    \end{align}
    for the choice $m^*=2\log(A/\delta)$. Notice the above vanishes with $A$. This result motivates our quest for a data-dependent bound, which considers an empirical estimate of $p$ and does not assume a worst-case distribution as in (\ref{eq:baseline}) and (\ref{Data Independent T2}). Theorem \ref{main_result_1} below improves upon (\ref{Data Independent T1}) and introduces a data-dependent bound which further accounts for $\hat{p}$.  
\begin{theorem}\label{main_result_1}
    Let $\delta_1>0$ and $\delta_2>0$. Let $m$ be a positive even number. Then, with probability $1-\delta_1-\delta_2$,
    \begin{align}\label{11}
&\nrm{p-\hat p}_\infty\leq\frac{1}{n}
\left(\frac{1}{\delta_1}\frac{n}{n-1}\left( \sum_i\sum_{k=1}^{m/2}k^{m-k}n^k(\hat{p}_i(1-\hat{p}_i))^k+\epsilon\right)\right)^{1/m}
\end{align}
for every even $m$, where
\begin{align}\nonumber%
\epsilon=\sqrt{\frac{n}{2}\log(1/\delta_2)}\cdot\sum_{k=1}^{m/2}k^{m-k}n^k\left(\frac{k}{n 4^{k-1}}+\frac{3k^3}{n^3 2^{2k-5}} \right).
\end{align}
\end{theorem}

To prove of Theorem \ref{main_result_1} we utilize the first inequality of (\ref{Data Independent T1}) with $\delta=\delta_1$. Then, we apply McDiarmind's inequality to obtain a concentration bound for $\sum_{i\in\mathbb{N}} p_i^k(1-p_i)^k$ around its empirical counterpart, with probability $1-\delta_2$. Finally, we apply the union bound to obtain (\ref{11}). The detailed proof is provided in Section \ref{proof2}. To further clarify the proposed bound we introduce the following simplified corollary, whose proof is located in Section \ref{proof2.1}.  
\begin{corollary}\label{T2_data_dependent}
     Let $\delta_1>0$ and $\delta_2>0$. Let $m$ be a positive even number. Then, with probability $1-\delta_1-\delta_2$,
\begin{align}\nonumber
&\nrm{p-\hat p}_\infty\leq\\\nonumber
&\frac{m}{\delta_1^{1/m}}
\frac{1}{n}\left( \sum_{k=1}^{m/2}\sum_i(n\hat{p}_i(1-\hat{p}_i))^k \right)^{1/m}+a\frac{m}{\delta_1^{1/m}} \left(\log\left(\frac{1}{\delta_2}\right)\right)^{1/2m} \left(\frac{1}{n^{\frac{1}{2}\left(1+\frac{1}{m}\right)}} +\frac{24}{n^{\frac{1}{2}\left(1+\frac{5}{m}\right)}}\right)
\end{align}
for every even $m$, where  $a=\sqrt{2\exp(1/\mathe)}$. Furthermore,  
$$\inf_m \frac{m}{\delta_1^{1/m}}=\mathe \log(1/\delta_1),$$
where and the infimum is obtained for a choice of $m^*=\log(1/\delta_1)$.
\end{corollary}
Let us compare Corollary \ref{T2_data_dependent} with the benchmark scheme (\ref{eq:baseline}) and our previous data independent bound (Theorem \ref{T_data_independent}). First, we notice a similar sample complexity of order of $\sqrt{1/n}$ in all the three schemes. This is not quite surprising, given (\ref{binomial1}). However,  Corollary \ref{T2_data_dependent} demonstrates an improved dependency on the underlying distribution, which now depends on $\hat{p}$ and does not assume a worst-case distribution. Unfortunately, the dependency in $\hat{p}$ is somewhat involved, and does not hold the desired form of (\ref{eq:dream}). Finally, we compare the dependency in the confidence level $\delta$. Here, the data independent bounds introduce a squared root  logarithmic dependency in $1/\delta$. On the other hand, Corollary \ref{T2_data_dependent} only attains a logarithmic dependency in $1/\delta_1$ (where $\delta_1<\delta$) for the right choice of $m$. This difference is typically negligible compared to the other terms, especially in a fixed $\delta$ regime as later discussed. 

Next, we present our second main result, which introduces a dependency in $p$ that is closer to the desired form (\ref{eq:dream}). First, we introduce some additional notation.
\paragraph{Notation.}
\newcommand{\V}{V^*}
\renewcommand{\phi}{\varphi}
For any distribution $p_{i\in\N}$,
define $v=v(p)$
by
$v_i=p_i(1-p_i)$
and $v^*=\max_{i\in\N}v_i$ as above. 
Define the functional
\beqn
\label{eq:Vdef}
\V(p) = 
\sup_{i\in\N}
v_i(p^\downarrow) \log(i+1),
\eeqn
where $p^\downarrow$
is $p$ sorted in non-increasing order.
Define
\beqn
\label{eq:phidef}
\phi(t) = t\log\frac1t,
\qquad 0\le t\le 1.
\eeqn

\begin{theorem}
\label{thm:aryeh-ub}
Let $p=p_{i\in\N}$
be a distribution over $\N$
and put
$v^*=v^*(p)$,
$V^*=\V(p)$.
For $n\ge 81$
and $\delta\in(0,1)$, we have that
\begin{align}
\nrm{p-\hat p}_\infty\leq&\label{eq:UB1}2\sqrt{\frac{V^*}{n}+\frac{v^*}{n}\log\frac2\delta}+\frac{4}{3n}\log\frac{2(n+1)}\delta+\frac{\log n}n \leq\\
&\label{eq:UB2}2\sqrt{\frac{\phi(v^*)}{n}+\frac{v^*}{n}\log\frac2\delta}+\frac{4}{3n}\log\frac{2(n+1)}\delta+\frac{\log n}n;
\end{align}
\begin{align}
&\label{eq:UB3} \nrm{p-\hat p}_\infty\leq 2\sqrt{\frac{v^*\log (n+1)}{n}+\frac{v^*}{n}\log\frac2\delta}+\frac{4}{3n}\log\frac{2(n+1)}\delta+\frac{\log n}n
\end{align}
holds with probability
at least $1-\delta-81/n$.
\end{theorem}
\begin{remark}
\label{rem:logn}
It seems that the $\log(n)/n$ term
can be improved to
$\log(n)/(n\log\log\log n)$;
we shall explore this in the sequel.
\end{remark}

It is instructive to compare 
Theorem \ref{thm:aryeh-ub} with our 
ambitious desideratum
\eqref{eq:dream}.
The loosest bound therein,
\eqref{eq:UB3}, 
features the desired dependency
on $v^*$,
but at the cost of a $\log n$
factor.
The sharper bound
\eqref{eq:UB2} replaces the
$\log n$ with 
$v^*\log\frac1{v^*}$.
Finally,
\eqref{eq:UB1}
gives the optimal
(at least for the MLE, cf. Proposition
\ref{prop:decoup-lb})
quantity $\V$. 
The proof of Theorem \ref{thm:aryeh-ub} is provided in Section \ref{proof3}. It relies on techniques from empirical
process theory and large deviations.  

Next, we provide the empirical counterpart of Theorem \ref{thm:aryeh-ub}, which depends on $\hat{v}^*=\sup_{i\in\N}\hat p_i(1-\hat p_i)$.

\begin{theorem}
    \label{thm:aryeh-ub-emp}
Let $p=p_{i\in\N}$
be a distribution over $\N$. Let $\hat{p}$ be the MLE of $p$. Define
\beq
a &=& \frac{4}{3n}\log\frac{2(n+1)}\delta+\frac{\log n}n,
\\
b &=&
2\sqrt{\frac{\log (n+1)}{n}+\frac{1}{n}\log\frac2\delta}.
\eeq
Then, with probability $1-\delta-81/n$,  
\begin{align}
\label{eq:logn}
\nrm{p-\hat p}_\infty
\le
a+3b^2/2
+b\sqrt{a}
+3b\sqrt{\hat v^*}/2.
\end{align}
\end{theorem}
\begin{remark}
Note that the estimate in
\eqref{eq:logn}
is of order
$
\sqrt{\frac{\hat v^*\log n}{n}+\frac{\hat v^*}{n}\log\frac1\delta}+\frac{1}{n}\log\frac{n}\delta+\frac{\log n}n
$
--- matching, up to constants, the form of
\eqref{eq:UB3}.
\end{remark}

The proof of Theorem \ref{thm:aryeh-ub-emp} follows the proof of \citet[Lemma 5]{DBLP:conf/icml/DasguptaH08} and is left for Section \ref{proof4}. 

\paragraph{Open problem.}
The estimate in Theorem \ref{thm:aryeh-ub-emp}
gives an empirical analog
of \eqref{eq:UB3}. We conjecture that some empirical analog of \eqref{eq:UB1} should be possible as well:
a bound of the general form
$
\sqrt{\frac{\hat V^*}{n}+\frac{\hat v^*}{n}\log\frac1\delta}+\frac{1}{n}\log\frac{n}\delta+\frac{\log n}n
$.

\paragraph{Near-optimality.}
To argue the near-optimality\footnote{
We use the term ``instance-optimality'' in the spirit of
Theorems 2.3 and 2.4 of \citet{CohenKW20}: fully empirical
data-dependent bounds that cannot be significantly improved upon.
}
of the above 
bounds 
we introduce our lower bounds 
on 
$\sup_{i \in \N}|\hat p_i-p_i|$
for some fixed constant $\delta>0$;
this is equivalent to 
lower bounding
$\E\sup_{i \in \N}|\hat p_i-p_i|$. 
Understanding the correct dependence on $\delta$
is left for future work.%

For a fixed $\delta$, the upper bound in
\eqref{eq:UB1}
consists of two terms:
one of order $\sqrt{V^*/n}$
and another one of order $\log(n)/n$.
We shall argue below that 
the first is tight and the second nearly so,
albeit in different senses.

The near-optimality of the $\log(n)/n$
term is proved
in the following result, whose proof is provided in Section \ref{proof5}. 
Note
that the lower bound obtained for this term
is of a minimax type, meaning that it holds for
{\em any} estimator, not just the MLE.
\begin{proposition}
\label{prop:fano}
There is an absolute constant $c>0$ such that
the following holds for all sufficiently large $n$.
For
any
estimator $\tilde{p}(X^n)$,
there is a 
distribution $p_{i\in \mathbb{N}}$ on $\N$
such that
\beqn
\label{eq:fano-lb}
\E\nrm{p-\tilde p}_\infty
&\ge&
\frac{c\log n}{n\log\log n}
\eeqn
for all sufficiently large $n$.
\end{proposition}

Our lower bound matching
the $\sqrt{V^*/n}$ term will be limited
to the MLE, but will have the advantage
of holding pointwise for {\em any} given
distribution
---
in constradistinction to
the minimax bound in
Proposition~\ref{prop:fano},
which only holds for {\em some}
adversarial distribution. The proof of Proposition \ref{prop:decoup-lb} is provided in Section \ref{proof6}. 
\begin{proposition}
\label{prop:decoup-lb}
For any distribution  $p_{i\in \mathbb{N}}$ 
and its corresponding MLE $\hat p$,
we have
\beq
\liminf_{n\to\infty}
\sqrt n
\E
\nrm{p-\hat p}_\infty
\ge c\sqrt{V^*(p)},
\eeq
where $c>0$ is an absolute constant.
\end{proposition}
\begin{remark}
    We show in Section \ref{proof6} that the lower bound is
necessarily only asymptotic (rather than {\em finite-sample}, in the sense of holding for all nn), as a consequence of previous results  \citep{Berend2013}
\end{remark}
Finally,
the following is a straightforward
consequence of Neyman-Pearson (Lemma \ref{lem:bern-np}):
\begin{proposition}
\label{prop:p_0-lb}
For
any
estimator $\tilde{p}$
there exists a 
distribution $p_{i\in \mathbb{N}}$
such that
\beq
\E
\nrm{p-\tilde p}_\infty
\ge 
c\sqrt{\frac{v^*(p)}{n}},
\eeq
for all sufficiently large $n$,
where $c>0$ is an absolute constant.
\end{proposition}
The proof of Proposition \ref{prop:p_0-lb} is provided in Section \ref{proof7}. 
It is instructive to compare
Propositions \ref{prop:decoup-lb} and \ref{prop:p_0-lb}.
The former holds for any fixed distribution $p$
and the bound is stronger (since $V^*\ge v^*$),
but only for the MLE estimate.
The latter holds for all estimators,
but the distribution can be aversarially
chosen for each sample size $n$ and the bound is weaker.
We conjecture that the lower bound in 
Proposition \ref{prop:decoup-lb}
holds for all estimators and not just the MLE.

\section{Experiments}
Let us now demonstrate our proposed bounds. 
We focus on two benchmark distributions which represent two extreme cases. That is, we study the Zipf's law and the uniform distributions. The Zipf's law distribution is a typical benchmark in large alphabet probability estimation; it is a commonly used heavy-tailed distribution, mostly for modeling natural (real-world) quantities in physical and social sciences, linguistics, economics and others fields \citep{saichev2009theory}. The Zipf's law distribution follows $p_i={i^{-s}}/{\sum_{r=1}^A r^{-s}}$ where $A$ is the alphabet size and $s$ is a skewness parameter. We set $s=1.1$ throughout our experiments. In each experiment we draw $n$ samples from a distribution over an alphabet size $A$ to evaluate 
the proposed bounds for a given confidence level $1-\delta$. We repeat this process $10^4$ times and report the average bound and coverage rate (that is, the number of times that the infinity norm is not greater than the bound).

In the first experiment we focus on $A=100$ and  $\delta=0.05$. We examine three bounds. First, we consider the bound from Theorem \ref{main_result_1} with $\delta_1=0.99\delta$, $\delta_2=0.01\delta$. We set $m$ to minimize (\ref{11}) over the worst-case distribution (see (\ref{13})). This results in $m*=\log(1/\delta_1)+2 \approx 8$. Further, we examine Theorem \ref{thm:aryeh-ub-emp} and the benchmark bound (\ref{eq:baseline}). To further assess the tightness of our results we introduce an Oracle lower bound (OLB). The OLB knows the true distribution and evaluates the $1-\delta$ quantile of the desired infinity norm. Figure \ref{fig1} summarizes the results we achieve. First, we observe that Theorem \ref{main_result_1} outperforms both Theorem \ref{thm:aryeh-ub-emp} and the benchmark. It is also relatively close to the OLB, especially as $n$ increases. We emphasize that although Theorem \ref{thm:aryeh-ub-emp} demonstrates a steep descent, it does not outperform Theorem  \ref{main_result_1}, even for a relatively large $n=10^5$. The reason for this phenomenon is the fixed $\delta$ regime, in which Theorem  \ref{main_result_1} is favorable. Importantly, all the examined bounds attain the prescribed coverage rate as desired.

\begin{figure}[ht]
\centering
\includegraphics[width =0.7\textwidth,bb=0 200 600 585,clip]{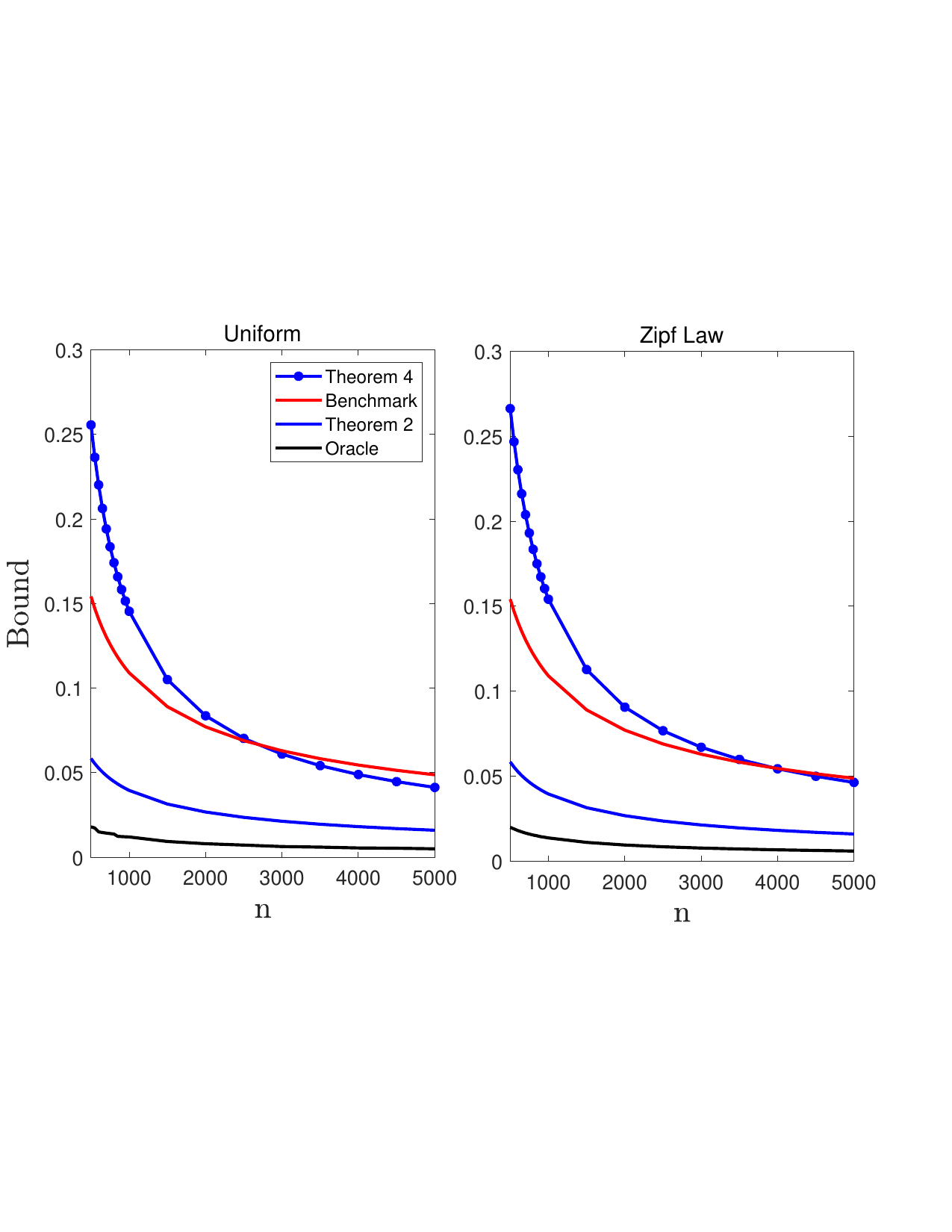}
\caption{The proposed bounds compared to the benchmark and to an Oracle, as $n$ grows and $\delta=0.05$}
\label{fig1}
\end{figure}

Next, we examine the performance of our proposed schemes for a decaying confidence level, $\delta=1/n^2$. As above, we set $A=100$ and focus on the two benchmark distributions. Figure \ref{fig2} demonstrates the results we achieve. Here, we see the advantage of Theorem \ref{thm:aryeh-ub-emp}, as it outperforms the alternatives for relatively large $n$. Once again, all bounds attain the prescribed confidence level. 
\begin{figure}[ht]
\centering
\includegraphics[width =0.8\textwidth,bb=35 210 600 570,clip]{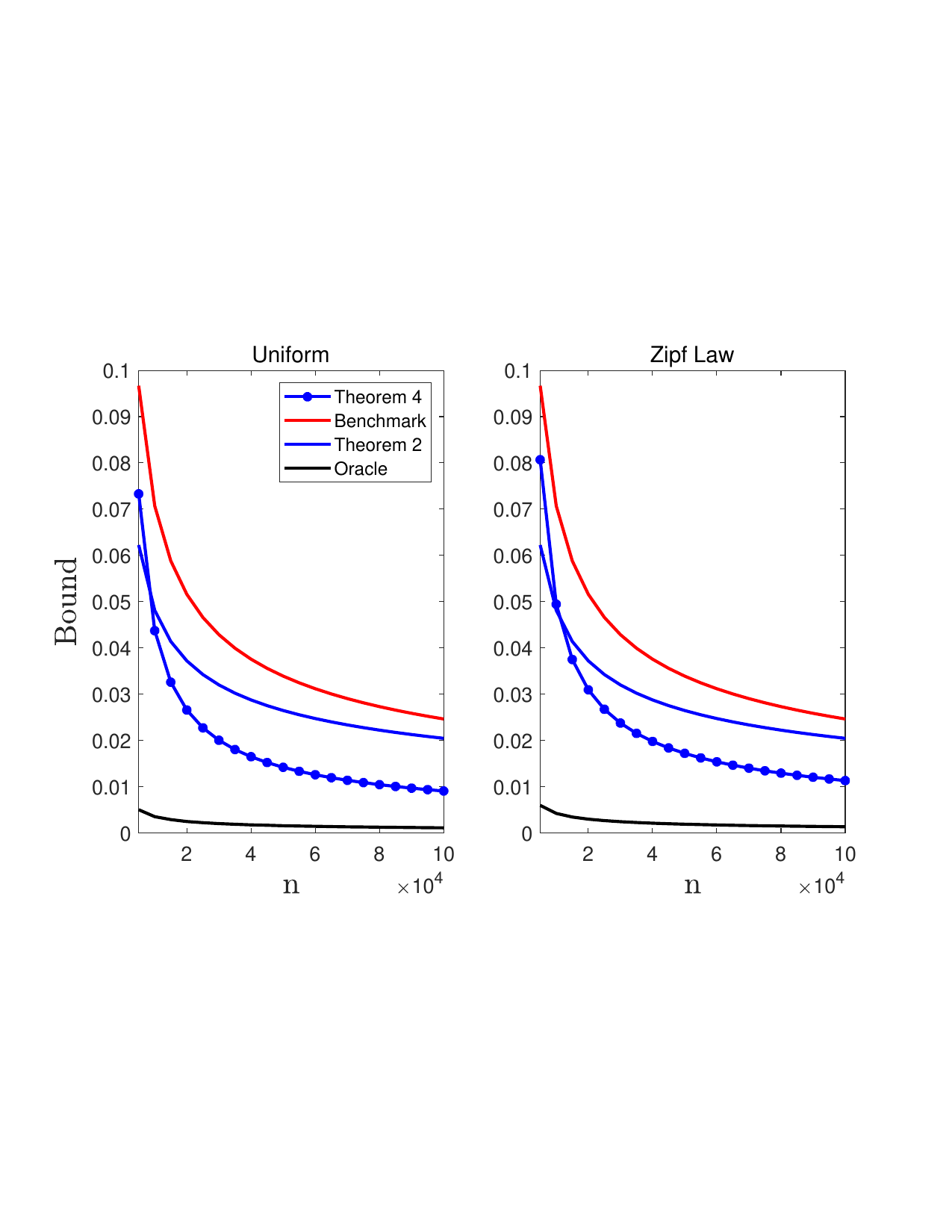}
\caption{The proposed bounds compared to the benchmark and to an Oracle, as $n$ grows and $\delta=1/n^2$}
\label{fig2}
\end{figure}

\section{Application to Inference of Frequent Events} \label{Application}

We now introduce an important application of our proposed scheme. Consider a survey asking individuals for their favorite food. We would like to report the $k$ most popular foods along with their associated CIs. The common approach is to construct $k$ marginal (binomial) intervals of confidence level $1-\delta$ each. This approach is genuinely wrong. For example, consider the case of $k=1$, $n=100$ and a uniform distribution over an alphabet size $A=||p||_0$. By definition, the most popular food in the sample would attain at least a single vote. Therefore, its exact lower bound CI (of level $1-\delta=0.95$) is at least $\theta_l=2.5\cdot10^{-4}$. This means that for $A>4000$, we attain zero coverage rate (!). This phenomenon is not quite surprising. Traditional (frequentist) inference assumes a fixed and unknown parameter $\theta$. Here, the inferred parameter is data-dependent, as it corresponds to the most frequent symbols in the sample. That is, we may obtain different $k$ most popular foods for different samples. This type of inference problem is known as \textit{selective inference} \citep{ben2017concentration}. Selective inference is a complicated task which is extensively studied in recent years \citep{tibshirani2016exact, berk2013valid,painsky2024confidence}. One of the first major contributions to the problem is due to \citet{benjamini2005false}. In their work, they showed that conditional coverage, following any selection rule for any set of (unknown) values for the parameters, is impossible to achieve. This means we cannot simply infer on the chosen parameters, given that they were selected.

Naturally, by controlling the infinity norm we implicitly control of the $k$ most frequent events. That is,  assume we found $T_\delta(X^n)$ that satisfies (\ref{infinity norm}).  Then, we have $|p_i-\hat{p}_i(X^n)|\geq T_\delta(X^n)$ for every $i \in \mathbb{N}$ with probability $1-\delta$, including the $k$ most frequent events, $\hat{p}_{[i]}, i=1,...,k$. However, it is of a natural concern that such an approach is not tight enough, as it is oblivious to $k$. In the following we study this claim and discuss the tightness of the infinity norm with respect to the $k$ most frequent events.

\begin{theorem}\label{selective_inference_LB}
Let $p=p_{i\in \mathbb{N}}$ be a distribution over $\mathbb{N}$. Let $\hat{p}$ be the MLE of $p$.
     Let $j=\text{argmax}\; \hat{p}_i$ be the most frequent symbol in the sample. Assume there exists $U_\delta(X^n)$ such that
    \begin{align}\label{most_frequent}
        \P\left(|p_j-\hat{p}_j|\geq U_\delta(X^n)  \right) \leq \delta.
    \end{align}
    Then, 
    \begin{align}\label{min_length}
  \mathbb{E}(U_\delta(X^n))\geq z_{\delta/2}\sqrt{\frac{p_{[1]}(1-p_{[1]})}{n}}+O\left(\frac{1}{n}\right)      
    \end{align}
for sufficiently large $n$.
\end{theorem}

The proof of Theorem \ref{selective_inference_LB} is provided in Section \ref{proof8}. It  utilizes the optimally of the CP CI and additional asymptotic properties. Let us now consider the $k$ most frequent events. We would like to refrain from multiplicity corrections, so we seek an interval for $\sup_{i \in \mathcal{X}_k(X^n)} |p_i-\hat{p}_i(X^n)|$ where $\mathcal{X}_k(X^n)$ is the collection of the $k$ most frequent events in the sample. This set naturally contains the single most frequent event, so a CI of average length (\ref{min_length}) is inevitable. 

Let us compare Theorem \ref{selective_inference_LB} with our proposed bounds. For this purpose we turn to real-world data sets. Notice that in the real-world settings, the true underlying probability is unknown. Hence, we treat the empirical distribution of the full data-set as the underlying distribution and sample from it accordingly. We begin with a census data; we consider the $2000$ United States Census \citep{us2014frequently}, which lists the frequency of the top $1000$ most common last names in the United States. We randomly sample $n$ names (with replacement) and examine the studied bounds for $\delta=0.05$. In addition, we present the Oracle CIs for the single most frequent symbol and the infinity norm. The left chart of Figure \ref{fig3} demonstrates the results we achieve. As we can see, the Oracle CIs are very close to each other and the difference between them and Theorem \ref{selective_inference_LB} is also negligible. This shows that the infinity norm is a very good proxy to the $k$ most frequent symbols in the alphabet. As we further examine our results, we see that for a typical experiment of $n=10000$, the top $k=5$ surnames are Smith, Johnson,
Williams, Brown and Jones with $\hat{p}_i=[0.0213,0.0165,0.0137,0.0134,0.0130]$ respectively. Theorem $\ref{main_result_1}$ attains a bound of $0.0108$ while the benchmark is about three times greater, $0.0345$. Next, we consider a corpus linguistic experiment. The popular Broadway play \textit{Hamilton} consists of $20{,}520$ words, of which $3{,}578$ are distinct. We randomly sample $n$ words (with replacement), and evaluate the corresponding bounds. The right chart of Figure \ref{fig3} demonstrates the  results we achieve. Once again,it is quite evident that Theorem \ref{main_result_1} outperforms its alternatives in this fixed $\delta=0.05$ regime. Further, we observe that the the infinity norm is a tight proxy to the $k$ most frequent symbols in the alphabet.
\begin{figure}[ht]
\centering
\includegraphics[width =0.75\textwidth,bb=20 190 600 590,clip]{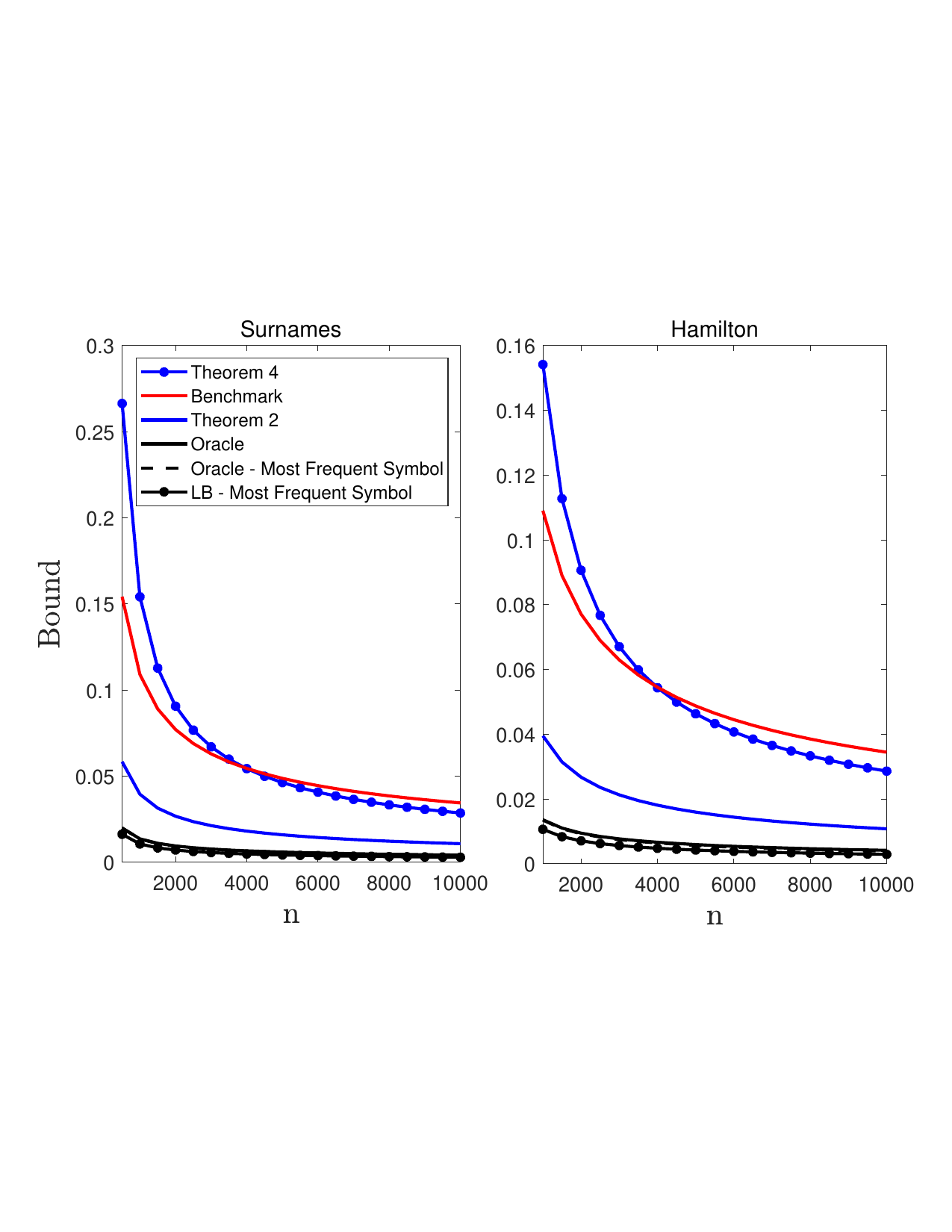}
\caption{The proposed bounds compared to the benchmark and to an Oracle, as $n$ grows. The lower bound for the most frequent symbol corresponds to Theorem \ref{selective_inference_LB}}
\label{fig3}
\end{figure}

\section{Discussion and Conclusions}
In this work we study distribution estimation under the $\ell_\infty$ norm. We introduce two data-dependent upper bounds for the MLE, which significantly improve upon currently known results.   
Our first bound (Theorem \ref{main_result_1}) demonstrates favorable performance in small sample size and fixed $\delta$ regimes. However, its dependency in the data is somewhat involved, compared to our ``dream'' result (\ref{eq:dream}). Our second bound (Theorem \ref{thm:aryeh-ub-emp}) improves the explicit dependency in the data, and demonstrates favorable performance in larger sample regimes, where $\delta$ decays with $n$. 
The above upper bounds are matched by nearly-optimal lower bounds, demonstrating the tightness of our analysis. Finally, we introduce an important application to our work in selective inference. We show that by utilizing  $\ell_\infty$ results, we provide relatively tight confidence interval for the most frequent events in the sample.

\section*{Acknowledgments}
The authors thank
Yanjun Han
and 
V\'{a}clav Vor\'{a}\v{c}ek
for enlightening discussions.
AK is partially supported by the Israel Science Foundation (grant No. 1602/19), an Amazon Research Award, and the Ben-Gurion University Data Science Research Center.
AP is partially supported by the Israel Science Foundation (grant No. 963/21).

\section{Proofs}

\subsection{A Proof for Theorem \ref{T_data_independent}}\label{proof1}
First, notice we have \begin{align}\label{main_data_depented_tight}\nonumber
    \mathbb{E}\big(\sup_i |p_i-\hat{p}_i(X^n)|\big)^m\overset{(\text{i})}{=}&\mathbb{E}\left(\sup_i (p_i-\hat{p}_i(X^n))^m\right)\overset{(\text{ii})}{\leq}\mathbb{E}\left(\sum_i (p_i-\hat{p}_i(X^n))^m\right)=\\\nonumber
    &\frac{1}{n^m}\sum_i\mathbb{E}(n_i-np_i)^m\overset{(\text{iii})}{\leq}\frac{1}{n^m} \sum_i \sum_{k=1}^d k^{m-k}(np_i(1-p_i))^k
    \end{align}
    where $d=n/2$ and
    \begin{enumerate}[(i)]
    \item follows  from the monotonicity of the power function.
    \item The supremum of non-negative elements is bounded from above by their sum \citep{maddox1988elements}.   
    \item follows from Theorem $4$ of \cite{skorski2020handy} 
    \end{enumerate}
    Applying Markov's inequality we obtain
    \begin{align}
    \P\big(\sup_i |p_i-\hat{p}_i(X^n)|\geq a\big)\leq &\frac{1}{a^m}\mathbb{E}\left(\sup_i |p_i-\hat{p}_i(X^n)|\right)^m\leq\\\nonumber
    &\frac{1}{a^m}\frac{1}{n^m} \sum_i \sum_{k=1}^d k^{m-k}(np_i(1-p_i))^k.
    \end{align}
    Setting the right hand side to equal $\delta$ yields 
    $$a=\frac{1}{n}\left(\frac{1}{\delta_1} \sum_i \sum_{k=1}^d k^{m-k}(np_i(1-p_i))^k\right)^{1/m}. $$
Therefore, with probability $1-\delta$, we have
\begin{align}\label{ba}
    \sup_i |p_i-\hat{p}_i&(X^n)|\leq \frac{1}{n}\left(\frac{1}{\delta}  \sum_{k=1}^d k^{m-k}n^k\sum_ip_i^k(1-p_i)^k\right)^{1/m}.
    \end{align}
Let us further bound from above the right hand side of (\ref{ba}). We have,  
\begin{align}\label{B2}
    \sum_ip_i^k(1-p_i)^k\overset{(\text{i})}{\leq}& \max_{t\in [0,1]} t^{k-1}(1-t)^{k}\overset{(\text{ii})}{\leq}\left(\frac{k-1}{2k-1}\right)^{k-1}\left(\frac{k}{2k-1}\right)^{k} \overset{(\text{iii})}{\leq}\\\nonumber
    &\left(\frac{k}{2k-1}\right)^{2k-1}=\left(1+\frac{-k+1}{2k-1}\right)^{2k-1}\overset{(\text{iv})}{\leq} \exp(-k+1)
\end{align}
\begin{enumerate}[(i)]
\item follows from $\sum_i p_i \psi(p_i) \leq \max_{t\in[0,1]}\psi(t)$ for $\psi(p_i)=p_i^{k-1}(1-p_i)^{k}$. That is, the mean of a random variable not greater than its maximum. 
\item simple derivation shows that the maximum of $t^{k-1}(1-t)^k$ is attained for $t^*=\frac{k-1}{2k-1}$.    
\item is due to $\left(\frac{k-1}{2k-1}\right)^{k-1}\leq \left(\frac{k}{2k-1}\right)^{k-1}$. 
\item follows from Bernoulli inequality. 
\end{enumerate}
Importantly, notice that for a choice of $p=[1/2,1/2,0,\dots,0]$ we have
\begin{align}
    \sum_ip_i^k(1-p_i)^k=2\left(\frac{1}{2}\right)^{2k}=\left(\frac{1}{2}\right)^{2k-1},
\end{align}
which approaches the term on the right hand side of inequality (iii), as $k$ increases. 
Finally, plugging (\ref{B2}) to (\ref{ba}) we obtain 
\begin{align}\label{ba2}
    \sup_i |p_i-\hat{p}_i(X^n)|\leq& \frac{1}{n}\left(\frac{1}{\delta}  \sum_{k=1}^d k^{m-k}n^k\exp(-k+1)\right)^{1/m} =\\\nonumber
            &\frac{1}{\sqrt{n}}\left(\frac{\sqrt{m/2}}{\delta^{1/m}}\right)\exp\left(-\frac{1}{2}+\frac{1}{m}\right)+O\left(\frac{1}{n^{\frac{1}{2}+\frac{1}{m}}}\right)
\end{align}
for every even $m>0$.

\subsection{A Proof for Theorem \ref{main_result_1}}\label{proof2}
We begin with the following proposition. 
\begin{proposition}\label{P2_tight}
        Let $\delta_2>0$. Then, with probability $1-\delta_2$, 
\begin{align}
    \sum_i\sum_{k=1}^{m/2} k^{m-k} (&np_i(1-p_i))^k\leq \frac{n}{n-1}\bigg( \sum_i\sum_{k=1}^{m/2} 
  k^{m-k}(n\hat{p}_i(1-\hat{p}_i))^k+\epsilon \bigg)
\end{align}
for every even $m$, where 
\begin{align}
    \epsilon=\sqrt{\frac{n}{2}\log(1/\delta_2)}\sum_{k=1}^d &k^{m-k}n^k  \bigg(\frac{k}{n 4^{k-1}}+\frac{3k(k-1)(k-2)}{n^3\cdot 2^{2k-5}} \bigg)
    .
\end{align}
\end{proposition}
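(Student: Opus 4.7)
The inequality to establish is equivalent, after rearrangement, to
\[
\frac{n-1}{n}L - T(X^n) \leq \epsilon
\]
holding with probability at least $1-\delta_2$, where $L := \sum_{i}\sum_{k=1}^{m/2} k^{m-k}(np_i(1-p_i))^k$ is the deterministic left-hand side and $T(X^n) := \sum_{i}\sum_{k=1}^{m/2} k^{m-k}(n\hat{p}_i(1-\hat{p}_i))^k$ is its empirical counterpart. The factor $n/(n-1)$ is dictated by the classical unbiased-variance identity $\mathbb{E}[n\hat{p}_i(1-\hat{p}_i)] = (n-1)p_i(1-p_i)$, which already makes the $k=1$ contribution tight in expectation. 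The plan is therefore twofold: (a) a bounded-differences (McDiarmid) concentration of $T(X^n)$ around $\mathbb{E}[T(X^n)]$, and (b) an exact computation of $\mathbb{E}[T(X^n)]$ via binomial factorial moments, absorbing any residual bias into $\epsilon$.

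For step (a), view $T(X^n)$ as a symmetric function of $n$ i.i.d.\ categorical draws. Modifying one draw changes only two empirical probabilities $\hat{p}_a,\hat{p}_b$ by $\mp 1/n$. Writing $h_k(x) := (x(1-x))^k$, the induced change in $n^{k}[h_k(\hat{p}_a)+h_k(\hat{p}_b)]$ admits a Taylor expansion of the form
\[
\frac{h_k'(\hat{p}_b)-h_k'(\hat{p}_a)}{n} + \frac{h_k''(\hat{p}_a)+h_k''(\hat{p}_b)}{2n^{2}} + \frac{h_k'''(\xi_b)-h_k'''(\xi_a)}{6n^{3}}.
\]
Direct differentiation yields $\|h_k'\|_\infty \leq k/4^{k-1}$ (saturating via $(x(1-x))^{k-1}\leq 4^{-(k-1)}$) and a third-derivative bound whose leading behavior is proportional to $k(k-1)(k-2)/4^{k-3}$; the prefactor $k(k-1)(k-2)$ vanishing for $k\in\{1,2\}$ matches the corresponding feature of the stated $\epsilon$. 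The $n^{-2}$ contribution can be folded into the other two via the mean-value refinement $h_k'(\hat{p}_b)-h_k'(\hat{p}_a) = h_k''(\eta)(\hat{p}_b-\hat{p}_a)$ combined with the antisymmetric $\pm 1/n$ pairing of the two branches. Aggregating the per-sample bound with the weights $k^{m-k}n^{k}$ gives a bounded-differences constant $c$ equal to the bracketed sum inside $\epsilon$, and a one-sided McDiarmid inequality yields $T(X^n) \geq \mathbb{E}[T(X^n)] - c\sqrt{n\log(1/\delta_2)/2}$ with probability at least $1-\delta_2$.

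For step (b), I would use the binomial factorial-moment identity $\mathbb{E}[X^{(j)}(n-X)^{(j)}] = n^{(2j)} p^{j}(1-p)^{j}$ for $X \sim \Bin(n,p)$, expand $X^{k}(n-X)^{k}$ in falling factorials, and compute $\mathbb{E}[T_k]$ as an explicit polynomial in $n$. For $k=1$ this gives $\mathbb{E}[T_1] = \frac{n-1}{n}L_1$; for $k\geq 2$ the residual $\frac{n-1}{n}L_k - \mathbb{E}[T_k]$ is a polynomial in $n$ of strictly lower leading order than the McDiarmid deviation (which scales as $\sqrt{n}\cdot n^{k-1}$), and the coarse bounds $p_i(1-p_i)\leq 1/4$ and $\sum_i p_i = 1$ confirm it is dominated by $\epsilon$. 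I expect the main obstacle to be the clean matching of constants in the bounded-differences step: a plain Taylor expansion leaves an $n^{-2}$ slack that cannot be absorbed unless the $\hat{p}_a$ and $\hat{p}_b$ contributions are paired symmetrically and the first-order term is sharpened by the mean-value identity above. Once $c$ is pinned down to the stated form, the concentration and moment calculations compose immediately to yield the proposition.
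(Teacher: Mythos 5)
Your step (a) is essentially the paper's argument: McDiarmid applied to $T(X^n)=\sum_i\sum_k k^{m-k}(n\hat p_i(1-\hat p_i))^k$, with the bounded-differences constant obtained by noting that one changed observation moves exactly two empirical frequencies by $\mp 1/n$ and then bounding $|h_k(x\pm 1/n)-h_k(x)|$ for $h_k(x)=(x(1-x))^k$. One remark: the $n^{-2}$ ``slack'' you worry about is a non-issue. The exact first-order mean value theorem gives $|h_k(x+1/n)-h_k(x)|=|h_k'(\xi)|/n\le k/(n4^{k-1})$ with no remainder at all, since $|h_k'(x)|=k(x(1-x))^{k-1}|1-2x|\le k\,4^{-(k-1)}$; this already fits inside the stated per-coordinate bound (whose second, cubic-in-$k$ term is then pure slack). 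Your proposed antisymmetric pairing does not in fact cancel the second-order contribution --- the two branches produce $\tfrac{1}{2n^2}\left(h_k''(\hat p_a)+h_k''(\hat p_b)\right)$, a sum rather than a difference --- but you do not need it.

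Step (b) is where you genuinely diverge from the paper, and where the gap is. The paper does not compute $\E[T]$: it applies Jensen's inequality to the convex map $x\mapsto x^k$ together with $\E[\hat p_i(1-\hat p_i)]=(1-1/n)p_i(1-p_i)$ to obtain the one-sided bound $\E[T]\ge\sum_i\sum_k k^{m-k}\left((n-1)p_i(1-p_i)\right)^k$, so that no bias term remains to be controlled. Your plan instead computes $\E[T]$ exactly and proposes to ``absorb the residual bias into $\epsilon$''. But $\epsilon$ in the proposition is exactly the McDiarmid deviation $c\sqrt{(n/2)\log(1/\delta_2)}$ --- the concentration step consumes all of it, leaving no room to absorb anything. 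Moreover the residual $\tfrac{n-1}{n}L_k-\E[T_k]$ is not always $\le 0$: writing $q=1-p$, one computes $\E[(n\hat p\hat q)^2]=\tfrac{(n-1)(n-2)(n-3)}{n}p^2q^2+\tfrac{(n-1)(n-2)}{n}pq+\tfrac{n-1}{n}pq$, and the difference $\tfrac{n-1}{n}(npq)^2-\E[(n\hat p\hat q)^2]=\tfrac{(n-1)pq}{n}\left((5n-6)pq-(n-1)\right)$ is positive and of order $n$ when $p$ is near $1/2$. So, as written, your route yields the proposition only with $\epsilon$ enlarged by a lower-order additive term, not with the stated $\epsilon$. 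To close this you must either (i) replace the exact computation by the Jensen lower bound, as the paper does --- noting that even there, extracting a single factor $(1-1/n)$ from $(1-1/n)^k$ is delicate, since $(1-1/n)^k\le 1-1/n$ for $k\ge 2$, so the comparison to $\tfrac{n-1}{n}L$ does not follow termwise --- or (ii) quantify the slack between the true bounded differences and the stated constant and show that it covers the bias. Your instinct that there is a bias to control is therefore not wrong; it just cannot be hidden inside the $\epsilon$ given in the statement.
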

\begin{proof}
    Define $\psi(n,d,\hat{p})=\sum_i\sum_{k=1}^dk^{m-k}(n\hat{p}_i(1-\hat{p}_i))^k$.
    McDiarmind's inequality suggests that
    \begin{align}\nonumber   
    \text{P}\left(\psi(n,d,\hat{p})-\mathbb{E}\left(\psi(n,d,\hat{p})\right)\leq -\epsilon  \right)\leq \exp\left(\frac{-2\epsilon^2}{\sum_{j=1}^n c_j^2} \right)
    \end{align}
    where
    \begin{align}
        \sup_{x'_j \in \mathcal{X}}\big|\psi(n,d,\hat{p})-\psi(n,d,\hat{p}')\big|\leq c_j.
    \end{align}
    where $\hat{p}'$ is the MLE over the same sample $x^n$, but with a different $j^{th}$ observation, $x_j'$.
    First, let us find $c_j$. We have
    \begin{align}\label{c_j_tight}
    &\sup_{x'_j \in \mathcal{X}}\big|\psi(n,d,\hat{p})-\psi(n,d,\hat{p}')\big|\overset{(\text{i})}{\leq}\\\nonumber 
    &\sup_{p\in [0,1-1/n]}2\big|\sum_{k=1}^dk^{m-k}\left(np(1-p)\right)^k-\sum_{k=1}^dk^{m-k}\left(n(p+1/n)(1-(p+1/n))\right)^k)\big|=\\\nonumber
    &\sup_{p\in [0,1-1/n]}2\big|\sum_{k=1}^d k^{m-k} n^k \left(p(1-p)\right)^k-\left((p+1/n)(1-(p+1/n))\right)^k)\big|\overset{(\text{ii})}{\leq}\\\nonumber
    &2\sum_{k=1}^d k^{m-k} n^k \left(\frac{k}{n\cdot 4^{k-1}}+\frac{3k(k-1)(k-2)}{n^3\cdot 2^{2k-5}}\right)
    \end{align}
where 
\begin{enumerate}
    \item Changing a single observation effects only two symbols (for example, $\hat{p}_l$ and $\hat{p}_t$), where the change is $\pm 1/n$. 
    \item Please refer to Appendix A below. 
\end{enumerate}
Next, we have
\begin{align}\nonumber
\mathbb{E}(\psi(n,d,\hat{p}))\geq& \sum_i\sum_{k=1}^d k^{m-k} n^k\left(\mathbb{E}(\hat{p}_i(1-\hat{p}_i)) \right)^k=\\\nonumber
&\sum_i\sum_{k=1}^d k^{m-k} n^k \left(\left(1-\frac{1}{n}\right)p_i(1-p_i)\right)^k\geq\\
&\left(1-\frac{1}{n}\right) \sum_i\sum_{k=1}^d k^{m-k} (np_i(1-p_i))^k\label{V1_tight}
\end{align}
where the first inequality follows from Jensen Inequality and the equality that follows is due to $\mathbb{E}(\hat{p}_i(1-\hat{p}_i))=p(1-p)(1-1/n)$. Going back to McDiarmind's inequality, we have 
\begin{align}\label{ineq1} \P \left(\mathbb{E}\psi(n,d,\hat{p})\geq \psi(n,d,\hat{p})+\epsilon  \right)\leq \exp\left(\frac{-2\epsilon^2}{n c_j^2} \right)
\end{align} 
In word, the probability that the random variable $Z=\psi(n,d,\hat{p})$ is smaller than a constant $C=\mathbb{E}(\psi(n,d,\hat{p}))-\epsilon$ is not greater that $\nu=\exp\left(-2\epsilon^2/{\sum_{j=1}^n c_j^2} \right)$. Therefore, it necessarily means that the probability that $Z$ is smaller than a constant smaller than $C$, is also not greater than $\nu$. Hence, plugging  (\ref{V1_tight})  we obtain
\begin{align}\nonumber
\P\left(\left(1-\frac{1}{n}\right) \psi(n,d,p)\geq \psi(n,d,\hat{p})+\epsilon  \right)\leq \exp\left(\frac{-2\epsilon^2}{\sum_j c_j^2} \right)
\end{align}
Setting the right hand side to equal $\delta_2$ we get 
\begin{align}
\epsilon=\sqrt{\frac{n}{2}\log(1/\delta_2)}\sum_{k=1}^d& k^{m-k} n^k\left( \frac{k}{n\cdot 4^{k-1}}+\frac{3k(k-1)(k-2)}{n^3\cdot 2^{2k-5}}\right)
\end{align} and with probability $1-\delta_2$, 
\begin{align}
    \sum_i\sum_{k=1}^d k^{m-k}& (np_i(1-p_i))^k\leq \frac{n}{n-1}\left( \sum_i\sum_{k=1}^d 
  k^{m-k}(n\hat{p}_i(1-\hat{p}_i))^k+\epsilon \right)
\end{align}
\end{proof}

Finally, we apply the union bound to (\ref{ba}) with $\delta=\delta_1$ and Proposition  \ref{P2_tight} to obtain the stated result.

\subsubsection{A Proof for Corollary \ref{T2_data_dependent}}\label{proof2.1}
We prove the Corollary  with two propositions. 
    \begin{proposition}\label{P1_new}
        Let $\delta_1>0$. Then, with probability $1-\delta_1$, 
        \begin{align}
            \sup_{i  \in \mathcal{X}} |p_i-\hat{p}_i(X^n)|\leq \frac{m}{2n}\left(\frac{1}{\delta_1}\right)^{1/m}\left(\sum_i\sum_{k=1}^{m/2}(np_i(1-p_i))^k\right)^{1/m}
        \end{align}
        for every even $m>0$.
    \end{proposition}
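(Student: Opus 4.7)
The approach mirrors the derivation of (\ref{ba}) in Section~\ref{proof of data dependent}, with a single additional simplification that produces the cleaner prefactor $m/2$ and strips the $k^{m-k}$ weights. First I would reproduce steps (i)--(iii) of display (\ref{main_data_depented_tight}) verbatim -- monotonicity of $x \mapsto x^m$ for even $m$, the elementary inequality $\sup \le \sum$ for non-negative quantities, and Theorem~4 of \cite{skorski2020handy} applied coordinatewise -- but with the tight upper limit $m/2$ in the Skorski bound (which for even $m$ is the natural one, since the $m$-th central binomial moment is a polynomial of degree $m/2$ in $np(1-p)$). This yields
\[
\mathbb{E}\bigl(\sup_i |p_i - \hat{p}_i(X^n)|\bigr)^m \;\le\; \frac{1}{n^m}\sum_i\sum_{k=1}^{m/2} k^{m-k}\bigl(np_i(1-p_i)\bigr)^k.
\]

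Second, I would replace $k^{m-k}$ by a uniform $k$-free upper bound. Since $m$ is a positive even integer, $m \ge 2$, and for every $k \in \{1,\dots,m/2\}$ one has $k \le m/2$ and $m-k \ge 0$, so the elementary chain $k^{m-k} \le (m/2)^{m-k} \le (m/2)^m$ applies. Substituting this back gives
\[
\mathbb{E}\bigl(\sup_i |p_i - \hat{p}_i(X^n)|\bigr)^m \;\le\; \Bigl(\frac{m}{2n}\Bigr)^m \sum_i\sum_{k=1}^{m/2}\bigl(np_i(1-p_i)\bigr)^k.
\]

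Third, Markov's inequality, applied exactly as in the derivation of (\ref{ba}), yields
\[
\P\bigl(\sup_i |p_i - \hat{p}_i(X^n)| \ge a\bigr) \;\le\; \frac{1}{a^m}\Bigl(\frac{m}{2n}\Bigr)^m \sum_i\sum_{k=1}^{m/2}\bigl(np_i(1-p_i)\bigr)^k,
\]
and equating the right-hand side to $\delta_1$ and solving for $a$ delivers precisely the bound claimed in Proposition~\ref{P1_new}.

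The only step beyond what is already done for Theorem~1 is identifying the uniform estimate $k^{m-k} \le (m/2)^m$ on the truncated range $k \le m/2$; once this is in place the moment-plus-Markov argument is a direct rerun of the one established above, so I do not anticipate any real obstacle requiring new technical tools.
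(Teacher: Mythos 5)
Your proposal is correct and follows essentially the same route as the paper: the moment bound from Theorem 1 (display (15) in the main text) truncated at $k=m/2$, the uniform estimate $k^{m-k}\le (m/2)^m$, and then Markov's inequality solved for $a$. You have in fact cleaned up a notational wrinkle in the paper (which writes $d=n/2$ but uses $d=m/2$ in the final simplification), and your justification of $k^{m-k}\le (m/2)^{m-k}\le(m/2)^m$ for $1\le k\le m/2$ and even $m\ge 2$ is exactly the step the paper invokes.
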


    \begin{proof}
    First, we have
    \begin{align}\label{main_data_depented}\nonumber
    \mathbb{E}\big(\sup_i |p_i-\hat{p}_i(X^n)|\big)^m\overset{(\text{i})}{\leq} &\frac{1}{n^m} \sum_i \sum_{k=1}^d k^{m-k}(np_i(1-p_i))^k\overset{(\text{ii})}{\leq}\\\nonumber
    &\left(\frac{d}{m}\right)^m \sum_i\sum_{k=1}^d (np_i(1-p_i))^k%
    \end{align}
    where $d=n/2$ and
    \begin{enumerate}
    \item follows  from (15) in the main text .
     \item follows from $k^{m-k}\leq  d^m$ for every $k\in\{1,...,d\}$.  
    \end{enumerate}
    Applying Markov's inequality we obtain
    \begin{align}
    \P\left(\sup_i |p_i-\hat{p}_i(X^n)|\geq a\right)\leq &\frac{1}{a^m}\mathbb{E}\left(\sup_i |p_i-\hat{p}_i(X^n)|\right)^m\leq\\\nonumber
    &\frac{1}{a^m}\left(\frac{d}{n}\right)^m \sum_i\sum_{k=1}^d (np_i(1-p_i))^k.
    \end{align}
    Setting the right hand side to equal $\delta_1$ yields 
    $$a=\left(\frac{1}{\delta_1}\left(\frac{d}{n}\right)^m \sum_i\sum_{k=1}^d (np_i(1-p_i))^k\right)^{1/m}=\frac{m}{2n} \left(\frac{1}{\delta_1}\sum_i\sum_{k=1}^{m/2} (np_i(1-p_i))^k\right)^{1/m}. $$
    \end{proof}

\begin{proposition}\label{P2_new}
        Let $\delta_2>0$. Then, with probability $1-\delta_2$, 
        
\begin{align}
    &\sum_i\sum_{k=1}^d (np_i(1-p_i))^k\leq\\\nonumber
    &\frac{n}{n-1}\left( \sum_i\sum_{k=1}^d(n\hat{p}_i(1-\hat{p}_i))^k+d\sqrt{\frac{1}{2}\log(1/\delta_2)}\left(2n^{d-1/2}+48n^{d-5/2}\right)\right)
\end{align}
for every even $m$. 
\end{proposition}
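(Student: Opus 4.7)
My plan is to mirror the proof of Proposition~\ref{P2_tight} almost line-for-line, applied to the simpler functional
$$\phi(n,d,\hat{p}) \;=\; \sum_i\sum_{k=1}^{d} (n\hat{p}_i(1-\hat{p}_i))^k,$$
i.e., the quantity that appears in Proposition~\ref{P2_tight} but with every coefficient $k^{m-k}$ replaced by $1$. As in that proof, the three ingredients are (a) a McDiarmid bounded-differences estimate on $\phi$, (b) a Jensen lower bound on $\mathbb{E}\phi$, and (c) an algebraic rearrangement linking the two.

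For the McDiarmid step, swapping a single observation shifts exactly two coordinates of $\hat{p}$ by $\pm 1/n$, and by the pointwise Appendix~A estimate already used in (\ref{c_j_tight}), the bounded-differences constant satisfies
$$c_j \;\le\; 2\sum_{k=1}^{d} n^k\!\left(\tfrac{k}{n\cdot 4^{k-1}}+\tfrac{3k(k-1)(k-2)}{n^3\cdot 2^{2k-5}}\right).$$
Unlike in Proposition~\ref{P2_tight}, no $k^{m-k}$ weight remains, so each of the two inner sums can be bounded by $d$ times its $k=d$ summand once one checks that the terms are monotonically dominated by their $k=d$ value for large $n$. A direct simplification using $d=n/2$ then collapses this into $c_j \le 2n^{d-1/2}+48n^{d-5/2}$, which will feed the bracketed factor in the proposition.

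For the Jensen step, exactly as in (\ref{V1_tight}), I would combine the identity $\mathbb{E}(\hat{p}_i(1-\hat{p}_i))=p_i(1-p_i)(1-1/n)$ with the convexity of $x\mapsto x^k$ to obtain
$$\mathbb{E}\phi(n,d,\hat{p}) \;\ge\; \Big(1-\tfrac{1}{n}\Big)\sum_i\sum_{k=1}^{d}(np_i(1-p_i))^k.$$
Then, by the one-sided McDiarmid inequality
$$\P\!\left(\mathbb{E}\phi(n,d,\hat{p}) \ge \phi(n,d,\hat{p})+\epsilon\right) \;\le\; \exp\!\left(\tfrac{-2\epsilon^2}{n c_j^2}\right),$$
setting the right-hand side equal to $\delta_2$ gives $\epsilon = c_j\sqrt{(n/2)\log(1/\delta_2)}$. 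Substituting the Jensen lower bound for $\mathbb{E}\phi$ and dividing through by $1-1/n$ yields precisely the claimed inequality, with the factor $d\sqrt{(1/2)\log(1/\delta_2)}(2n^{d-1/2}+48n^{d-5/2})$ arising as $\sqrt{(n/2)\log(1/\delta_2)}\cdot c_j$ once the $d$ from the summation is pulled outside.

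The main obstacle I anticipate is purely bookkeeping rather than conceptual: verifying that the two explicit sums inside $c_j$ are genuinely dominated by their $k=d$ terms (so that pulling a factor of $d$ in front and invoking the maximum is tight enough to reproduce the closed-form bound in the statement). The Jensen and McDiarmid steps are essentially a direct transplant from Proposition~\ref{P2_tight}.
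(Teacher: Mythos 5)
Your proposal is correct and follows essentially the same route as the paper's own proof: McDiarmid applied to $\sum_i\sum_{k=1}^d(n\hat{p}_i(1-\hat{p}_i))^k$ with the Appendix~A bounded-differences constant, the Jensen lower bound $\mathbb{E}\phi\ge(1-1/n)\sum_i\sum_k(np_i(1-p_i))^k$, and the same rearrangement. The only (immaterial) difference is bookkeeping: the paper collapses $c_j$ by factoring out $n^{d-1}$ and bounding the residual sums $\sum_k k/4^{k-1}$ and $\sum_k k(k-1)(k-2)/4^{k-2}$ by constants, whereas you bound each sum by $d$ times its largest ($k=d$) term; both yield the stated $2dn^{d-1}+48dn^{d-3}$.
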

\begin{proof}
    McDiarmind's inequality suggests that
    \begin{align}\nonumber   
    \P\left(\sum_i\sum_{k=1}^d(n\hat{p}_i(1-\hat{p}_i))^k-\mathbb{E}\left(\sum_i\sum_{k=1}^d(n\hat{p}_i(1-\hat{p}_i))^k\right)\leq -\epsilon  \right)\leq \exp\left(\frac{-2\epsilon^2}{\sum_{j=1}^n c_j^2} \right)
    \end{align}
    where
    \begin{align}
        \sup_{x'_j \in \mathcal{X}}\big|\sum_i\sum_{k=1}^d\left(n\hat{p}_i(1-\hat{p}_i)\right)^k-\sum_i\sum_{k=1}^d\left(n\hat{p}'_i(1-\hat{p}'_i)\right)^k)\big|\leq c_j.
    \end{align}
    First, let us find $c_j$. We have
    \begin{align}\label{c_j}
    &\sup_{x'_j \in \mathcal{X}}\big|\sum_i\sum_{k=1}^d\left(n\hat{p}_i(1-\hat{p}_i)\right)^k-\sum_i\sum_{k=1}^d\left(n\hat{p}'_i(1-\hat{p}'_i)\right)^k)\big|\overset{(\text{i})}{\leq}\\\nonumber &2\sup_{p\in [0,1-1/n]}\big|\sum_{k=1}^d\left(np(1-p)\right)^k-\sum_{k=1}^d\left(n(p+1/n)(1-(p+1/n))\right)^k)\big|=\\\nonumber
    &2\sup_{p\in [0,1-1/n]}\big|\sum_{k=1}^d n^k \left(p(1-p)\right)^k-\left((p+1/n)(1-(p+1/n))\right)^k)\big|\leq\\\nonumber
    &2\sum_{k=1}^dn^k\sup_{p\in [0,1-1/n]}\big|\left(p(1-p)\right)^k-\left((p+1/n)(1-(p+1/n))\right)^k)\big|\overset{(\text{ii})}{=}\\\nonumber
    &2\sum_{k=1}^d n^k \left( \frac{k}{n\cdot 4^{k-1}}+\frac{3k(k-1)(k-2)}{n^3\cdot 2^{2k-5}}\right)\leq\\\nonumber
    &n^{d-1}\sum_{k=1}^d\left(\frac{2k}{4^{k-1}}+\frac{3k(k-1)(k-2)}{n^2\cdot 2^{2k-4}}\right)\overset{(\text{iii})}{\leq} 2dn^{d-1}+48dn^{d-3}
    \end{align}
where 
\begin{enumerate}
    \item Changing a single observation effects only two symbols (for example, $\hat{p}_l$ and $\hat{p}_t$), where the change is $\pm 1/n$. 
    \item Please refer to Appendix A. \item Follows from  $\sum_{k=1}^d\frac{k}{4^{k-1}}=4\sum_{k=1}^d\frac{k}{4^{k}}\leq d$ and 
    \begin{align}
    \sum_{k=1}^d\frac{k(k-1)(k-2)}{4^{k-2}}\leq \sum_{k=1}^d\frac{k^3}{4^{k-2}}\leq  d \max_{k\in [1,d]}\frac{k^3}{4^{k-2}}\leq 16\frac{2\exp(-3)}{\log(4)}\leq 16
    \end{align}
     where the maximum is obtain for $k^*=3/\log(4)$.
\end{enumerate}

next, we have
\begin{align}\label{V1}
\mathbb{E}\bigg(\sum_i\sum_{k=1}^d&(n\hat{p}_i(1-\hat{p}_i))^k\bigg)\geq \sum_i\sum_{k=1}^d \left(\mathbb{E}(n\hat{p}_i(1-\hat{p}_i)) \right)^k=\\\nonumber
&\sum_i\sum_{k=1}^d n^k \left(\left(1-\frac{1}{n}\right)p_i(1-p_i)\right)^k\geq\left(1-\frac{1}{n}\right) \sum_i\sum_{k=1}^d (np_i(1-p_i))^k
\end{align}
Going back to McDiarmind's inequality, we have 
\begin{align}    \P\left(\mathbb{E}\left(\sum_i\sum_{k=1}^d(n\hat{p}_i(1-\hat{p}_i))^k\right)\geq \sum_i\sum_{k=1}^d(n\hat{p}_i(1-\hat{p}_i))^k+\epsilon  \right)\leq \exp\left(\frac{-2\epsilon^2}{\sum_{j=1}^n c_j^2} \right)
\end{align} 
Plugging  (\ref{V1})  we obtain
\begin{align}\nonumber
\P\left(\left(1-\frac{1}{n}\right) \sum_i\sum_{k=1}^d (np_i(1-p_i))^k\geq \sum_i\sum_{k=1}^d(n\hat{p}_i(1-\hat{p}_i))^k+\epsilon  \right)\leq \exp\left(\frac{-2\epsilon^2}{\sum_j c_j^2} \right)
\end{align}
Setting the right hand side to equal $\delta_2$ we get $$\epsilon=\sqrt{\frac{n}{2}\log(1/\delta_2)}\left(2dn^{d-1}+48dn^{d-3}\right)$$ and with probability $1-\delta_2$, 
\begin{align}
    &\sum_i\sum_{k=1}^d (np_i(1-p_i))^k\leq\\\nonumber
    &\frac{n}{n-1}\left( \sum_i\sum_{k=1}^d(n\hat{p}_i(1-\hat{p}_i))^k+d\sqrt{\frac{1}{2}\log(1/\delta_2)}\left(2n^{d-1/2}+48n^{d-5/2}\right)\right)
\end{align}

\end{proof}

Finally, we apply the union bound to Propositions \ref{P1_new} and \ref{P2_new} to obtain 
\begin{align}\nonumber
&\sup_{i  \in \mathcal{X}} |p_i-\hat{p}_i(X^n)|\leq \\\nonumber
&\frac{m}{2n}\left( \frac{1}{\delta_1} \frac{n}{n-1}\left(\sum_i\sum_{k=1}^{d}(n\hat{p}_i(1-\hat{p}_i))^k+d\sqrt{\frac{1}{2}\log(1/\delta_2)}\left(2n^{d-1/2}+48n^{d-5/2}\right)\right)  \right)^{1/m}\leq\\\nonumber
&\frac{m}{2\delta_1^{1/m}}\frac{1}{n}\left(\frac{n}{n-1}\right)^{1/m}\left( \sum_i\sum_{k=1}^{m/2}(n\hat{p}_i(1-\hat{p}_i))^k \right)^{1/m}+\\\nonumber
&\frac{m}{2\delta_1^{1/m}}\frac{1}{n}\left(\frac{n}{n-1}\right)^{1/m}(m/2)^{1/m}\left(\frac{1}{2}\log\left(\frac{1}{\delta_2}\right)\right)^{1/m}\left(2n^{\frac{1}{2}-\frac{1}{2m}}+48n^{\frac{1}{2}-\frac{5}{2m}}\right)
\end{align}
with probability $1-\delta_1-\delta_2$. Define $g(m,\delta_1)={m}/{\delta_1^{1/m}}$. Further, it is immediate to show that $(m/2)^{1/m}\leq \sqrt{\exp(1/\exp(1))}$. Hence, with probability $1-\delta_1-\delta_2$,
\begin{align}\nonumber
\sup_{i  \in \mathcal{X}} |p_i-\hat{p}_i(X^n)|\leq& \frac{g(m,\delta_1)}{n}\left( \sum_i\sum_{k=1}^{m/2}(n\hat{p}_i(1-\hat{p}_i))^k \right)^{1/m}+\\\nonumber
&bg(m,\delta_1) (\log(1/\delta_2))^{1/2m} \left( n^{-\frac{1}{2}\left(1+\frac{1}{m}\right)}+24n^{-\frac{1}{2}\left(1+\frac{5}{m}\right)}\right)
\end{align}
for every even $m$, where  $b=\sqrt{2\exp(1/\exp(1))}$. Finally, we would like to choose $m$ which minimizes $g(m,\delta_1)$. We show in Appendix B that $\inf_m g(m,\delta_1)=\exp(1) \log(1/\delta_1)$, where and the infimum is obtained for a choice of $m^*=\log(1/\delta_1)$. 

\subsection{A Proof of Theorem \ref{thm:aryeh-ub}}\label{proof3}
Let us first introduce some auxiliary results and background
\subsubsection{Auxiliary Results}
\newcommand{\pl}[1]{\sqprn{#1}\subplus}
\newcommand{\dspl}[1]{\dsparen{#1}\subplus}
\newcommand{\subplus}{_{
\scalebox{.5}{
\!\!\!\!\!
$\boldsymbol{+}$}
}}
\begin{lemma}[contained in the proof of
Lemma 10,
\cite{CohenK23}]
\label{lem:CK}
Let
$Y_{i\in I \subseteq \N}$
be
random variables
such that, for each $i \in I$, there 
are $v_i>0$ and $a_i \geq 0$
satisfying
\beqn
\label{eq:bern-cond}
\PR{Y_i \geq \eps}
& \leq&
\exp\paren {- \frac{\eps^2}{2(v_i + a_i \eps)}},
\qquad\eps\ge0
.
\eeqn
Put
\beqn
\begin{array}{l}
v^*:=
\sup_{i \in I}v_i,
\quad
V^*:=
\sup_{i \in I}
 v_i \log (i+1),
\quad
a^*:=
\sup_{i \in I}a_i,
\quad
A^*:=
\sup_{i \in I} a_i \log (i+1).
\end{array}
\eeqn

Then
\beq
\PR{
\sup_{i \in I}Y_i
\ge
2\sqrt{V^*+v^*\log\frac1\delta}
+4A^*
+4a^*\log\frac1\delta
}
&\le&\delta
.
\eeq
\end{lemma}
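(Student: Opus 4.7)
The plan is a straightforward union bound over the countable index set $I$, preceded by inversion of the Bernstein-type tail for each individual $Y_i$. I would first observe that for fixed $i \in I$, equating the right-hand side of \eqref{eq:bern-cond} to a per-coordinate failure probability $\delta_i$ yields the quadratic $\eps^2 = 2(v_i + a_i \eps) \log(1/\delta_i)$ in $\eps$; solving and applying $\sqrt{x+y} \le \sqrt{x}+\sqrt{y}$ gives the per-coordinate deviation bound
\[
\PR{Y_i \ge \sqrt{2 v_i \log(1/\delta_i)} + 2 a_i \log(1/\delta_i)} \le \delta_i
\]
for any $\delta_i \in (0,1)$.

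Next I would distribute the total failure probability $\delta$ across coordinates by choosing $\delta_i := \delta/(\zeta(2)(i+1)^2)$, which satisfies $\sum_{i \in I} \delta_i \le \delta$ and yields
\[
\log(1/\delta_i) \le 2\log(i+1) + \log(1/\delta) + \log \zeta(2).
\]
A union bound then guarantees that with probability at least $1-\delta$, the per-coordinate bound holds for every $i\in I$ simultaneously. Plugging in and using $v_i \log(i+1) \le V^*$, $v_i \le v^*$, $a_i \log(i+1) \le A^*$, $a_i \le a^*$, I would obtain $v_i \log(1/\delta_i) \le 2V^* + v^* \log(1/\delta) + O(v^*)$ and $a_i \log(1/\delta_i) \le 2A^* + a^* \log(1/\delta) + O(a^*)$. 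Taking suprema over $i$ and combining the two pieces produces $\sqrt{2 v_i \log(1/\delta_i)} \le 2\sqrt{V^* + v^*\log(1/\delta)}$ together with $2 a_i \log(1/\delta_i) \le 4A^* + 4 a^* \log(1/\delta)$, once the $O(1)$ residue is absorbed (harmless since $\log \zeta(2) < 1$ and one may assume $\log(1/\delta) \ge 1$, else the claimed bound is trivial).

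The only real obstacle is constant-matching: the stated prefactors $2$ and $4$ are dictated by the polynomial decay rate of $\delta_i$. Specifically, the exponent $2$ in $\delta_i \propto (i+1)^{-2}$ is what converts $v_i \log(i+1)$ into $2V^*$ under the square root (yielding the external $2$ after taking $\sqrt{\cdot}$), and converts $a_i \log(i+1)$ into $2A^*$, which is doubled by the Bernstein-inversion factor $2$ in front of the linear term to give $4A^*$. No chaining, peeling, or finer technique appears to be needed; the proof is careful bookkeeping on top of classical Bernstein inversion, and the polynomial tail of $\delta_i$ is exactly what couples to the $\log(i+1)$ weights in $V^*$ and $A^*$.
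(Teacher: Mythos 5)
The paper never proves this lemma --- it is imported verbatim from the proof of Lemma 10 in \cite{CohenK23} --- so there is no in-paper argument to compare against; your proposal is the standard Bernstein-inversion-plus-union-bound route that the source uses, and it is correct up to one constant. The point to tighten: with $\delta_i=\delta/(\zeta(2)(i+1)^2)$ the residue $\log\zeta(2)\approx 0.50$ must be absorbed into $\log(1/\delta)$, which only works for $\delta\le 1/\zeta(2)\approx 0.61$, and your fallback that the claim is ``trivial'' for larger $\delta$ is not justified (the bound is not vacuous for $\delta\in(1/\mathe,1)$). The clean fix is to allocate $\delta_i=\delta/(i(i+1))$, so that $\sum_{i\ge1}\delta_i=\delta$ exactly and $\log(1/\delta_i)\le 2\log(i+1)+\log(1/\delta)$ with no additive constant; then $2v_i\log(1/\delta_i)\le 4V^*+2v^*\log(1/\delta)\le 4\left(V^*+v^*\log(1/\delta)\right)$ and $2a_i\log(1/\delta_i)\le 4A^*+2a^*\log(1/\delta)$, which yield exactly the stated prefactors for every $\delta\in(0,1)$.
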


\begin{remark}
\label{rem:p1/2}
When considering the random variable
$Z=\sup_{i \in \N}|\hat p_i-p_i|$,
there is no loss of generality in assuming
that $p_i\le1/2$, $i\in\N$.
Indeed, $|Y_i|=|\hat p_i-p_i|$
is distributed as 
$|n\inv\Bin(n,p_i)-p_i|$,
and the latter distribution is invariant under
the transformation $p_i\mapsto1-p_i$.
\end{remark}

\begin{lemma}
\label{lem:vsubgauss}
For any distribution $p_{i\in\N}$,
\beq
V(p)
&\le &
\phi(v^*(p))
.
\eeq
\end{lemma}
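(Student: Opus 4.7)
The plan is to exploit the probability-distribution constraint $\sum_i p_i = 1$ to trade off the magnitudes $v_i$ against the indices $i$ that appear in $V(p)$ through the factor $\log(i+1)$. By Remark \ref{rem:p1/2} we may assume $p_i \le 1/2$ for all $i$. Moreover, since the index-dependence in $V(p)$ enters only through the monotone factor $\log(i+1)$ while $v^*(p)$ is permutation-invariant, it is legitimate to relabel the symbols so that $p_1 \ge p_2 \ge \cdots$; only in this decreasing arrangement can a nontrivial relation between $V(p)$ and $v^*(p)$ be extracted.

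The key structural input is a Markov-type inequality in the decreasing arrangement: $1 = \sum_j p_j \ge \sum_{j \le i} p_j \ge i \cdot p_i$, hence $p_i \le 1/i$. Combining this with $v_i \lesssim p_i(1-p_i) \le p_i$ (using $p_i \le 1/2$) and with the uniform bound $v_i \le v^*(p)$, each term $v_i \log(i+1)$ admits two competing upper bounds: a flat one of size $v^*(p)\log(i+1)$ and a decaying one of order $\log(i+1)/i$.

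The core step is then to split the supremum over $i$ at a threshold $i^* \asymp 1/v^*(p)$ chosen so that these two bounds coincide. On $\{i \le i^*\}$ the uniform bound gives $v_i\log(i+1) \le v^*(p)\log(i^*+1)$, which is of order $v^*(p)\log(1/v^*(p))$. On $\{i > i^*\}$ the Markov bound, together with the monotonicity of $\log(i+1)/i$ for $i \ge 3$, shows that the supremum is attained near the boundary and is of the same order. This identifies $\phi$ as a function of the form $\phi(v)\asymp v\log(1/v)$, consistent with the sub-Gaussian-type behavior suggested by the lemma's name.

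The main obstacle will be pinning down the exact constants in $\phi$ to match the definition used in the main body of the paper, and verifying that the threshold $i^*$ can be chosen in the correct regime (in particular, that $1/v^*(p) \ge 3$ so that the monotonicity of $\log(i+1)/i$ applies, with a trivial boundary case otherwise). Once those are fixed, the proof reduces to the two-regime balancing argument above; tightness can be confirmed against the near-uniform configuration $p = (v^*(p),\ldots,v^*(p),0,0,\ldots)$ supported on roughly $1/v^*(p)$ atoms, for which $V(p)$ and $v^*(p)\log(1/v^*(p))$ differ only by an absolute constant.
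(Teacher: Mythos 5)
Your proposal is built on the same key observation as the paper's proof --- the monotone-rearrangement Markov bound $1\ge\sum_{j\le i}p_j\ge i\,p_i$, hence $p_i\le 1/i$ --- but you finish differently, and your finish is slightly lossier. The paper does not split the supremum at a threshold at all: it upgrades $p_i\le 1/i$ to $v_i=p_i(1-p_i)\le \tfrac1i\bigl(1-\tfrac1i\bigr)\le \tfrac1{i+1}$, so that $\log(i+1)\le\log(1/v_i)$ holds \emph{per index}, giving $v_i\log(i+1)\le v_i\log(1/v_i)$ for every $i$; it then concludes by the monotonicity of $x\log(1/x)$ on $[0,1/4]$ (the range of the $v_i$), obtaining exactly $V(p)\le v^*\log(1/v^*)$. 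Your two-regime balancing at $i^*\asymp 1/v^*$ reaches the same functional form, but because you only use the cruder $v_i\le p_i\le 1/i$, the crossover point delivers $v^*\log(1/v^*+1)$ (or a factor like $1/(1-v^*)$, depending on where you place the threshold) rather than $v^*\log(1/v^*)$; since the paper's $\phi$ is exactly $v\mapsto v\log(1/v)$, this is the one place your argument does not quite close as written --- and you correctly flag it. The fix is precisely the paper's refinement $v_i\le 1/(i+1)$: with it, either your split at $i^*+1=1/v^*$ or the threshold-free per-index argument yields the sharp constant. Your tightness check against the near-uniform configuration is a nice sanity check but is not needed for the lemma.
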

\begin{proof}
(This elegant proof idea is due to
V\'aclav Vor\'a\v{c}ek.)
There is no loss of generality
in assuming $p=p^\downarrow$.
The claim then amounts to
\beq
\sup_{i\in\N}
v_i\log(i+1)
\le 
v^*\log\frac{1}{v^*}
.
\eeq
The monotonicity of the $p_i$
implies 
$
p_i
\le
(p_1+\ldots+p_i)/i
\le1/i
$.
Now
$x\le1/i\implies
x(1-x)\le1/(i+1)
$ for $i\in\N$,
and hence
$v_i\le1/(i+1)$.
Thus,
$v_i\log(i+1)\le 
v_i\log\frac{1}{v_i}
$.
Finally, since
$x\log(1/x)$ is increasing
on $[0,1/4]$,
which is the range of the $v_i$,
we have
$
\sup_{i\in\N}
v_i\log\frac{1}{v_i}
\le
v^*\log\frac{1}{v^*}
$.
\end{proof}
\begin{remark}
    \label{rem:vv}
There is no reverse inequality
of the form
$\phi(v^*(p))\le F(V^*(p))$,
for any fixed $F:\R_+\to\R_+$.
This can be seen by considering $p$ supported on $[k]$,
with $p_1=\log(k)/k$ and the remaining masses
uniform. Then $V^*(p)\approx \log(k)/k$
while 
$\phi(v^*(p))
\approx
\log(k)\log(k/\log k)/k
$.
\end{remark}

\begin{proposition}
\label{prop:betan}
Let $n\geq10$ and $\beta=\log(n)$. Then, 
$$f(n)=\frac{\beta^{-\beta} n^2 \left(\frac{n-\beta}{n}\right)^{\beta-n}}{2^\beta-2}\leq
\frac{81}{2}.$$
\end{proposition}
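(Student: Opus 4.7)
My plan is to reduce the claim to a one-variable calculus problem. Setting $t=\beta=\log n\ge\log 10$, the key step is to expand $\log(1-\beta/n)$ as a power series and collect terms. Rearranging the resulting sums gives the identity
\[
(\beta-n)\log\!\left(1-\frac{\beta}{n}\right)=\beta-\sum_{k\ge 2}\frac{\beta^{k}}{k(k-1)\,n^{k-1}},
\]
and since the sum on the right is strictly positive, exponentiating and keeping only the $k=2$ term yields
\[
\left(\frac{n-\beta}{n}\right)^{\beta-n}=n\exp\!\left(-\sum_{k\ge 2}\frac{\beta^{k}}{k(k-1)\,n^{k-1}}\right)\le n\exp\!\left(-\frac{\beta^{2}}{2n}\right).
\]
Substituting into the definition of $f(n)$ and taking logarithms reduces the claim to the scalar inequality
\[
H(t):=3t-t\log t-\log(2^{t}-2)-\frac{t^{2}}{2e^{t}}\le\log(81/2),\qquad t\ge\log 10.
\]

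I then analyze $H$ as a smooth function of $t$. Differentiating gives
\[
H'(t)=2-\log t-\frac{2^{t}\log 2}{2^{t}-2}-\frac{t(2-t)}{2e^{t}},
\]
and the dominant balance $2-\log t-\log 2=0$ locates the interior maximum near $t^{\star}=e^{2}/2\approx 3.695$ (equivalently, $n\approx 40$). The boundary behavior is easy: $H(\log 10)\approx 3.65$ and $H(t)\to-\infty$ as $t\to\infty$ because $-t\log t$ eventually dominates. Hence $H$ attains its maximum on $[\log 10,\infty)$ at a unique interior critical point, and it remains to certify $\max_{t}H(t)\le\log(81/2)\approx 3.7014$.

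The main obstacle is that this bound is numerically razor-thin: a direct evaluation gives $H(t^{\star})\approx 3.7007$, leaving a margin of only about $0.0006$ in the log. To close the gap rigorously I would partition $[\log 10,\infty)$ into a small number of subintervals, say $[\log 10,3]$, $[3,5]$, and $[5,\infty)$, and verify $H\le\log(81/2)$ on each using a piecewise monotone bound derived from the sign of $H'$. On the subinterval containing $t^{\star}$ the gap is tightest; if it proves too tight one may retain also the $k=3$ term of the series above, namely $-t^{3}/(6e^{2t})$, which yields a small additional margin. Once the one-variable analysis is structured this way, the remaining work is elementary.
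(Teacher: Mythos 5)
Your reduction is sound as far as it goes: the power-series identity for $(\beta-n)\log(1-\beta/n)$ is correct, every discarded term is positive, so $f(n)\le e^{H(t)}$ with $t=\log n$ is a valid upper bound, and numerically $\max_{t\ge\log 10}H(t)\approx 3.6996$ (attained near $t\approx 3.4$), which does sit below $\log(81/2)\approx 3.7013$. But the proof stops exactly where the difficulty begins. The decisive step --- certifying $\max_t H(t)\le\log(81/2)$ over a \emph{continuum} with a margin of order $10^{-3}$ in the log --- is only sketched, and the sketch as written fails: on the subinterval $[3,5]$, which contains the interior maximum, $H$ is increasing and then decreasing, so ``a piecewise monotone bound derived from the sign of $H'$'' gives no upper bound there. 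Closing this would require either a rigorous fine grid together with an explicit bound on $|H'|$ (interval arithmetic in disguise) or a concavity/tangent-line argument near the critical point; neither is set up, and retaining the $k=3$ term only enlarges the margin to about $0.009$, which does not change the nature of the task. Two numerical slips also matter at this level of tightness: the critical point is near $t\approx 3.4$ (so $n\approx 30$), not $e^2/2\approx 3.69$, because $\tfrac{2^t\log 2}{2^t-2}\approx 0.85$ exceeds $\log 2$ noticeably at this scale; and $\max H\approx 3.6996$ rather than $3.7007$.

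The paper takes a route that avoids the continuum problem altogether by exploiting that $n$ is an integer: it shows $g(n)=\log f(n)$ has negative derivative for $n>200$ (via $\log\frac{n}{n-\beta}\le\frac{\beta}{n-\beta}$ and crude bounds on the remaining terms), so $f$ is decreasing there, and then evaluates $f$ at the finitely many integers $10\le n\le 200$, locating the maximum at $n=33$. A finite list of point evaluations is trivially rigorous, whereas your version trades it for a delicate one-variable optimization with almost no slack. If you want to salvage your smooth bound, the cleanest fix is to adopt the same structure: prove $H'(t)<0$ beyond an explicit threshold, and on the remaining compact range fall back to evaluating at integer $n$ only --- at which point you may as well evaluate $f$ itself.
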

\begin{proof}
    To prove the above, we show that $f(n)$ is decreasing for $n>200$. This means that the maximum of $f(n)$ may be numerically evaluated in the range $n\in\{10,...,200\}$. Finally, we verify that the maximum of $f(n)$ is attained for $n=33$, and is bounded from above by $81/2$ as desired. 
It remains to verify
    that $f(n)$ is decreasing for $n>200$. Since $f(n)$ is non-negative, it is enough to show that $g(n)=\log f(n)$ is decreasing. Denote
    \begin{align}
    g(n)=-\beta\log \beta+2\log n +(n-\beta)\log(n-\beta)+(n-\beta)\log n -\log(2^\beta-2).
    \end{align}
    Taking the derivative of $g(n)$ we have,
    \begin{align}
        &g'(n)=\\\nonumber
        &-\frac{1}{n}(\log \beta+1)+\frac{2}{n}+\left(1-\frac{1}{n}\right)(-\log(n-\beta)-1+\log n) +\frac{n-\beta}{n}-\frac{1}{n}\frac{2^\beta\log 2}{2^\beta-2}=\\\nonumber
        &\frac{1}{n}\left((n-1)\log \frac{n}{n-\beta}-\log \beta -\beta +2 -\frac{2^\beta\log 2}{2^\beta-2}\right)\leq\\\nonumber
        &\frac{1}{n}\left(n\log \frac{n}{n-\beta}-\log \beta -\beta+2 -\log 2\right)\leq
        \frac{1}{n}\left(\frac{n\beta}{n-\beta}-\log \beta -\beta +2 -\log 2\right)=\\\nonumber
        &\frac{1}{n}\left(\frac{\beta^2}{n-\beta}-\log \beta  +2 -\log 2\right),
    \end{align}
    where the first inequality follows from $\log (n /(n-\beta))\geq 1$ and $2^\beta/(2^\beta-2)\geq 1$, while the second inequality is due to Bernoulli's inequality, $(n/(n-\beta))^n\leq \exp(n\beta/(n-\beta))$.  
    Finally, it is easy to show that $\beta^2/(n-\beta)$ is decreasing for $n\geq 10$. This means that $\beta^2/(n-\beta)\leq (\log 10)^2/(10-\log(10))$ and $g'(n)<0$ for $n>200$.
\end{proof}

\begin{lemma}[generalized Fano method \citep{yu1997assouad}, Lemma 3]
\label{lem:fano}
    For 
    \(r \geq 2\), let 
    \(\mathcal{M}_r 
    \)
    be a collection of $r$
    probability measures
    \(\nu_1, \nu_2, ... ,\nu_r\)
    with some parameter of interest
    \(\theta(\nu)\) 
    taking
    values in pseudo-metric space \( (\Theta, \rho) \)
    such that for all
    \(j \neq k \), we have
    \[
    \rho(\theta(\nu_j), \theta(\nu_{k}) )
    \geq
    \alpha
    \]
    and
    \[
    D(\nu_j ~\Vert~ \nu_{k})
    \leq
    \beta.
    \]
    Then
    \[
    \inf_{\hat\theta}
    \max_{j \in [d]}
    \E_{Z \sim \mu_j}
    \rho(\hat\theta(Z), \theta(\nu_j) )
    \geq
    \frac{\alpha}{2} \paren{1 - \paren{\frac{\beta + \log 2}{\log r}}},
    \]
    where the infimum is over all estimators 
    \(\hat\theta:Z\mapsto\Theta\).
\end{lemma}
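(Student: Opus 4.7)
The plan is to reduce estimation to multiple-hypothesis testing and then apply Fano's inequality. Fix any estimator $\hat\theta$ and introduce the nearest-parameter decoder $\hat{J}(Z) := \argmin_{k\in[r]} \rho(\hat\theta(Z),\theta(\nu_k))$, with ties broken arbitrarily. On the event $\{\hat{J}(Z)\ne j\}$, the pseudo-metric triangle inequality together with the separation hypothesis yields
\[
\alpha \;\le\; \rho(\theta(\nu_j),\theta(\nu_{\hat{J}(Z)})) \;\le\; \rho(\theta(\nu_j),\hat\theta(Z)) + \rho(\hat\theta(Z),\theta(\nu_{\hat{J}(Z)})) \;\le\; 2\,\rho(\hat\theta(Z),\theta(\nu_j)),
\]
where the last inequality uses the minimality of $\hat{J}$. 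Markov's inequality then gives $\E_{Z\sim\nu_j}\rho(\hat\theta(Z),\theta(\nu_j)) \ge (\alpha/2)\,\P_{Z\sim\nu_j}[\hat{J}(Z)\ne j]$.

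Next I would introduce a uniform auxiliary index $J\sim\mathrm{Unif}[r]$ with $Z\mid J=j \sim \nu_j$ and pass from maximum to mean, obtaining
\[
\max_{j}\E_{Z\sim\nu_j}\rho(\hat\theta(Z),\theta(\nu_j)) \;\ge\; \frac{1}{r}\sum_{j=1}^{r}\E_{Z\sim\nu_j}\rho(\hat\theta(Z),\theta(\nu_j)) \;\ge\; \frac{\alpha}{2}\,\P[\hat{J}(Z)\ne J].
\]
The classical Fano inequality then supplies $\P[\hat{J}\ne J] \ge 1 - (I(J;Z)+\log 2)/\log r$, so everything reduces to bounding the mutual information. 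Writing $\bar\nu := r^{-1}\sum_k \nu_k$ for the marginal law of $Z$, one has $I(J;Z) = r^{-1}\sum_j D(\nu_j\Vert\bar\nu)$, and convexity of $D(\cdot\Vert\cdot)$ in its second argument yields $D(\nu_j\Vert\bar\nu)\le r^{-1}\sum_k D(\nu_j\Vert\nu_k)\le \beta$ by hypothesis. Hence $I(J;Z)\le \beta$, and substituting into the Fano bound and then taking the infimum over $\hat\theta$ delivers the stated lower bound.

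The only step with any real content is the reduction in the first display, which must work even though $\rho$ is merely a pseudo-metric: two distinct parameter values could in principle lie at distance zero, but the argument invokes only symmetry and the triangle inequality, so positive-definiteness is never needed. Every other ingredient — the passage from maximum to average, Fano's inequality, and the convexity bound on $I(J;Z)$ — is a textbook manipulation. Since the result is attributed to Yu (1997), I expect the write-up to follow essentially this template; the only presentational choice is whether to cite Fano as a black box or to unfold the short data-processing argument that underlies it.
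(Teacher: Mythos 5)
The paper does not prove this lemma at all --- it is imported as a black box from the cited reference (Yu 1997, Lemma~3), so there is no in-paper argument to compare against. Your proof is correct and is exactly the standard derivation of the generalized Fano method (nearest-parameter decoder plus triangle inequality to reduce to testing, Markov, Fano's inequality, and the convexity bound $I(J;Z)\le r^{-1}\sum_j D(\nu_j\Vert\bar\nu)\le\beta$), i.e.\ it reproduces the argument of the cited source rather than anything done here.
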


\begin{proposition}
\label{prop:pqdist2}
    Let $p$ and $q$ be two distributions with support size $n$.
Define $p$ by
\beq
p_1 = \frac{\log n}{2n\log\log n},
\quad
p_i = \frac{1-p_1}{n-1},
\qquad
i>1,
\eeq
and $q$ by $q_2=p_1$, and $q_i=p_2$ for $i\neq 2$. Then,
\begin{enumerate}
    \item $\nrm{p-q}_\infty \geq c\frac{\log n}{n\log\log n}$ for some $c>0$ and all $n$ sufficiently large. 
    \item $\lim_{n\rightarrow \infty}\frac{n}{\log n}D(p||q)=\frac{1}{2}$
\end{enumerate}
\end{proposition}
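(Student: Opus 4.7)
The starting observation is that $q$ is simply $p$ with coordinates $1$ and $2$ swapped: by definition $q_1=p_2$, $q_2=p_1$, and $q_i=p_2=p_i$ for every $i\ge 3$. Once this is noted, both parts collapse to asymptotics on just two coordinates, and the argument becomes a direct computation requiring no machinery from the earlier lemmas.

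For part (i) I will use the fact that $p$ and $q$ agree outside $\{1,2\}$, which gives $\|p-q\|_\infty=|p_1-p_2|$. Substituting $p_2=(1-p_1)/(n-1)$ yields
\[
p_1-p_2=\frac{np_1-1}{n-1}.
\]
Since $np_1=\frac{\log n}{2\log\log n}\to\infty$, the numerator is $(1+o(1))\,np_1$, so $p_1-p_2=(1+o(1))\,\frac{\log n}{2n\log\log n}$, and any $c<1/2$ suffices for all sufficiently large $n$.

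For part (ii) the coordinates $i\ge 3$ contribute zero to $D(p\|q)$, so the KL sum collapses to
\[
D(p\|q)=p_1\log\frac{p_1}{p_2}+p_2\log\frac{p_2}{p_1}=(p_1-p_2)\log\frac{p_1}{p_2}.
\]
I then compute $p_1/p_2=(n-1)p_1/(1-p_1)=(1+o(1))\,np_1=(1+o(1))\,\frac{\log n}{2\log\log n}$, giving $\log(p_1/p_2)=(1+o(1))\log\log n$. Combined with $p_1-p_2\sim\frac{\log n}{2n\log\log n}$ from part (i), this produces $D(p\|q)\sim\frac{\log n}{2n}$, and the stated limit $\tfrac{1}{2}$ follows.

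There is no genuinely hard step; the only real care is in tracking the $(1+o(1))$ factors so the constant $\tfrac{1}{2}$ is preserved exactly rather than swallowed into a vanishing error. In particular I will verify that $\log(2\log\log n)=o(\log\log n)$ and that $p_2/p_1\to 0$ so $(p_1-p_2)/p_1\to 1$; both facts are elementary, so I anticipate no genuine technical hurdle.
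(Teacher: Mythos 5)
Your proposal is correct and follows essentially the same route as the paper: both reduce part (i) to the single difference $p_1-p_2=(np_1-1)/(n-1)$ and part (ii) to the identity $D(p\|q)=(p_1-p_2)\log(p_1/p_2)$, then compute the asymptotics of each factor. The only difference is stylistic — you track $(1+o(1))$ factors where the paper expands the logarithms explicitly and shows the residual terms vanish — and your noted checks (e.g.\ $\log(2\log\log n)=o(\log\log n)$) are exactly the ones needed to make that bookkeeping rigorous.
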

\begin{proof}
For the first part, it is enough to show that 
$$|p_1-p_2|\geq c\log(n)/n\log\log n$$
for some $c>0$ and sufficiently large $n$. 
First, we show that $p_1\geq p_2$ for $n\geq (\log n)^2$. That is,
\begin{align}
    p_1-\frac{1-p_1}{n-1}=\frac{np_1-1}{n-1}>0
\end{align}
for $np_1>1$. Next, fix $0<c \leq 1/2$. We have,
\begin{align}
    |p_1-p_2|-\frac{c\log(n)}{n\log\log n}=&\frac{ap_1-1}{n-1}-\frac{c\log n}{n\log\log n}=\\\nonumber
    &\frac{1}{n-1}\left(\frac{\log n}{2\log\log n}-1-\frac{n-1}{n}\frac{c\log n}{\log\log n}  \right)=\\\nonumber
    &\frac{1}{(n-1)2\log\log n}\left(\log n\left(1-\frac{n-1}{n}2c\right)-2\log\log n \right)> 0
\end{align}
where the last inequality holds for $c(n-1)/n<1/2$ and sufficiently large $n$, as desired. We now proceed to the second part of the proof. 
\begin{align}
    \frac{n}{\log n}D(p||q)=\frac{n}{\log n}\left(p_1\log\frac{p_1}{q_1}+p_2\log\frac{p_2}{q_2}\right)=\frac{n}{\log n}(p_1-p_2)\log\frac{p_1}{p_2}.
\end{align}
First, we have
\begin{align}
    \frac{n}{\log n}(p_1-p_2)=&\frac{n}{\log n}\left(p_1-\frac{1-p_1}{n-1}\right)=\frac{n}{\log n}\left(\frac{np_1-1}{n-1}\right)=\\\nonumber
    &\frac{n}{\log n}\frac{\log n/2n\log\log n-1}{n-1}=\frac{n}{n-1}\left(\frac{1}{2\log \log n}-\frac{1}{\log n}\right).
\end{align}
Next, 
\begin{align}
    \log\frac{p_1}{p_2}=&\log(n-1)+
    \log\frac{p_1}{1-p_1}=\log(n-1)+\log\frac{\log n}{2n\log \log n - \log n}=\\\nonumber
    &\log(n-1)+\log \log n -2\log(2n\log\log n -\log n).
\end{align}
Putting it all together we obtain
\begin{align}
    \frac{n}{\log n}&D(p||q)=\\\nonumber
    &\frac{n}{n-1}\left(\frac{1}{2\log \log n}-\frac{1}{\log n}\right)\left( \log(n-1)+\log \log n -2\log(2n\log\log n -\log n)
    \right)= \\\nonumber
    &\frac{n}{n-1}\bigg(\frac{\log(n-1)}{2\log\log n}-\frac{\log(n-1)}{\log n}+\frac{1}{2}-\frac{\log\log n}{\log n}-\\\nonumber
    &\quad\quad\quad\;\;\frac{\log(2n\log\log n - \log n)}{2\log\log n}+\frac{\log(2n\log\log n - \log n)}{\log n}  \bigg)=\\\nonumber
    &\frac{n}{n-1}\bigg(\frac{1}{2}+\frac{\log(n-1)-\log(2n\log\log n - \log n)}{2\log\log n}+\\\nonumber
    &\quad\quad\quad\;\;\frac{\log(2n\log\log n - \log n)-\log(n-1)}{\log n}-\frac{\log\log n}{\log n}\bigg).
\end{align}
It is straightforward to show that the last three terms in the parenthesis above converge to zero for sufficiently large $n$, which leads to the stated result. 
\end{proof}

\begin{lemma}[\cite{peres-loc-coin17}]
\label{lem:bern-np}
When estimating a single Bernoulli parameter
in the range $[0,p_0]$,
$\Theta(p_0\eps^{-2}\log(1/\delta))$
draws are both necessary and sufficient 
to achieve additive accuracy $\eps$
with probability at least $1-\delta$.    
\end{lemma}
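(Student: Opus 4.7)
The plan is to establish both sufficiency (upper bound on sample complexity) and necessity (lower bound) separately, using standard concentration and information-theoretic tools.

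For sufficiency, I would analyze the empirical mean estimator $\hat p = n^{-1}\sum_{j=1}^{n}X_j$ via Bernstein's inequality. Because $\mathrm{Var}(X_j)=p(1-p)\le p\le p_0$, Bernstein yields
\[
\P\paren{\abs{\hat p - p}\ge\eps}\le 2\exp\paren{-\frac{n\eps^2}{2p_0+2\eps/3}}.
\]
In the interesting regime $\eps\lesssim p_0$, the denominator is $\Theta(p_0)$; setting the right-hand side equal to $\delta$ and solving for $n$ immediately gives the upper bound $n=O(p_0\eps^{-2}\log(1/\delta))$. In the complementary regime $\eps\gtrsim p_0$ the linear $\eps$ term in the denominator dominates and one recovers an even smaller sample complexity, which is absorbed by the $\Theta$.

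For necessity, I would use Le Cam's two-point method. Take the two Bernoulli candidates $p_1=p_0$ and $p_2=p_0-2\eps$ (or, if $\eps$ is comparable to $p_0$, center them around $p_0/2$ so both lie in $[0,p_0]$). Any estimator with additive accuracy $\eps$ and confidence $1-\delta$ must decide correctly between these two hypotheses with error at most $\delta$. The KL between the $n$-fold products is
\[
n\cdot D\paren{\mathrm{Bern}(p_1)\,\|\,\mathrm{Bern}(p_2)}\le n\cdot\frac{(p_1-p_2)^2}{p_2(1-p_2)}=O\paren{\frac{n\eps^2}{p_0}}.
\]
Combining this with the standard Le Cam/Fano inequality (essentially Lemma~\ref{lem:fano} applied with $r=2$, or equivalently Pinsker plus the data-processing inequality) forces $n\eps^2/p_0=\Omega(\log(1/\delta))$, giving $n=\Omega(p_0\eps^{-2}\log(1/\delta))$.

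The main obstacle will be dealing with boundary regimes carefully: if $\eps$ is comparable to (or larger than) $p_0$, the construction $p_1=p_0,\ p_2=p_0-2\eps$ either leaves the interval $[0,p_0]$ or drives the KL bound out of the ``small $\eps/p_0$'' linearization; one then has to either shift the two-point construction or verify that Hoeffding's $\eps^{-2}\log(1/\delta)$ rate already matches in that regime. Matching constants up to $\Theta(\cdot)$, rather than $O(\cdot)$, also requires tuning the $\log(1/\delta)$ confidence amplification via the standard trick of repeating a $\Theta(1)$-confidence estimator $O(\log(1/\delta))$ times and taking the median, which is the only nontrivial bookkeeping step in the upper bound.
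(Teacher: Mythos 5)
First, a point of comparison: the paper offers no proof of this lemma at all --- it is imported as a black box from the cited reference --- so there is no in-paper argument to measure your sketch against; what you have written is the standard self-contained route. Your sufficiency half is essentially fine: Bernstein applied to the empirical mean with $\mathrm{Var}(X_j)\le p_0$ already carries the $\log(1/\delta)$ factor, so the median-of-repetitions amplification you mention at the end is unnecessary. One caveat you should make explicit rather than wave at: when $\eps\gtrsim p_0$ the trivial estimator $\hat p\equiv 0$ succeeds with \emph{zero} samples, so the matching lower bound (and hence the $\Theta$) can only be asserted in the regime $\eps\le c\,p_0$; your claim that Bernstein gives ``an even smaller sample complexity'' there is backwards, since $\eps^{-1}\log(1/\delta)\ge p_0\eps^{-2}\log(1/\delta)$ once $\eps\ge p_0$.

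The genuine gap is in the necessity direction: neither of the two tools you invoke can produce the $\log(1/\delta)$ dependence. Lemma~\ref{lem:fano} applied with $r=2$ yields $\tfrac{\alpha}{2}\bigl(1-\tfrac{\beta+\log 2}{\log 2}\bigr)\le 0$, which is vacuous, and it is an in-expectation bound with no $\delta$ in it in any case; Pinsker's inequality gives $\mathrm{TV}\le\sqrt{D/2}$ and therefore can only force $D=\Omega(1)$, not $D=\Omega(\log(1/\delta))$. What closes the argument is the high-confidence form of Le Cam's two-point method: apply the data-processing inequality for KL to the binary test induced by the estimator (output $1$ iff $\hat p$ is within $\eps$ of $p_1$), which gives $n\,D\bigl(\mathrm{Bern}(p_1)\,\|\,\mathrm{Bern}(p_2)\bigr)\ge d_{\mathrm{KL}}(1-\delta\,\|\,\delta)\ge(1-2\delta)\log\tfrac{1-\delta}{\delta}$; equivalently one can use the Bretagnolle--Huber inequality $\mathrm{TV}\le\sqrt{1-e^{-D}}$. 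With that substitution, your two-point construction and the chi-square upper bound on the KL do yield $n=\Omega(p_0\eps^{-2}\log(1/\delta))$ in the regime $\eps\le c\,p_0$, and the sketch is complete.
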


\noindent \textbf{Bernstein inequalities}\\
\noindent Background: 
Let $Y \sim \text{Bin}(n,\theta)$ be a Binomial random variable 
and let
$\hat{\theta}=Y/n$ be the 
its MLE.

\begin{itemize}
    \item Classic Bernstein \citep{boucheron2003concentration}:
\beqn
\label{eq:cla-bern}
\PR{
\hat\theta
-
\theta \ge \eps}
&\le& \exp\paren{-\frac{n\eps^2}{2(
\theta(1-\theta)
+\eps/3}}
\eeqn
with an analogous bound for the left tail.
This implies:
\beqn
\label{eq:dev-bern}
|\theta-\hat\theta|
&\le&
\sqrt{\frac{2\theta(1-\theta)}{n}\log\frac{2}{\delta}}
+
\frac{2}{3n}\log \frac{2}{\delta}
.
\eeqn

\item Empirical Bernstein \cite[Lemma 5]{DBLP:conf/icml/DasguptaH08}:
\beqn
\label{eq:emp-bern}
|\theta-\hat\theta|
&\le&
\sqrt{\frac{5\hat{\theta}(1-\hat{\theta})}{n}\log\frac{2}{\delta}}
+
\frac{5}{n}\log \frac{2}{\delta}.
\eeqn

\end{itemize}

\noindent We are now ready to present the proof of Theorem $3$.

\subsubsection{Proof of Theorem \ref{thm:aryeh-ub}}

We assume without loss of generality that 
$p$ 
is sorted in descending order: $p_1\ge p_2\ge\ldots$
and further, as per Remark~\ref{rem:p1/2}, that $p_1\le1/2$.
The estimate $\hat p_i$ is just the MLE based on $n$ iid draws.

Our strategy for analyzing $\sup_{i\in\N}\abs{\hat p_i-p_i}$ will be to break up $p$ into the ``heavy'' masses, where we
apply a maximal Bernstein-type inequality, and the ``light'' masses, where we apply a multiplicative Chernoff-type bound.

We define the 
``heavy'' masses as those with $p_i\ge1/n$.
Denote by $I\subset\N$ the set of corresponding indices
and note that
$|I|\le n$.
For $i\in I$, put $Y_i=\hat p_i-p_i$.
Then \eqref{eq:cla-bern} implies that
each $Y_i$ satisfies \eqref{eq:bern-cond}
with $v_i=p_i(1-p_i)/n$
and $a_i=1/(3n)$;
trivially,
$\max_{i\in I}a_i\log(i+1)=
\log(n+1)/(3n)
$.
Invoking Lemma~\ref{lem:CK}
twice
(once for $Y_i$ and again for $-Y_i$)
together with the union bound,

we have,
with probability $\ge1-\delta$,
\beqn
\label{eq:E-heavy}
\max_{i \in I}|\hat p_i-p_i|
\le
2\sqrt{
\frac{V^*}{n}
+
\frac{v^*}{n}\log\frac2\delta
}
+
\frac{4\log(n+1)}{3n}
+
\frac{4}{3n}
\log\frac2\delta
.
\eeqn

Next, we analyze the light masses. 
Our first ``segment'' consisted of the $p_i\in[n\inv,1]$; these were the heavy masses.
We take the next segment to consist of 
$p_i\in[(2n)\inv,n\inv]$, of which there are at most $2n$ atoms.
The segment after that will be in the range $[(4n)\inv,(2n)\inv]$, and, in general, the $k$th segment is in the range
$[(2^kn)\inv,(2^{k-1}n)\inv]$, and will contain at most $2^kn$ atoms.
To the $k$th segment, we apply the Chernoff bound $\P(\hat p\ge p+\eps)\le\exp(-n D(p+\eps||p))$,
where $p=(2^kn)\inv$ and 
$\eps=\eps_k= 2^kp
\beta
-p$,
for some $\beta$
to be specified below.
[Note that $D(\alpha p||p)$ is monotonically increasing in $p$ for fixed $\alpha$,
so we are justified in taking the left endpoint.]
For this choice, in the $k$th segment we have
\beq
D(p+\eps||p)
&=&
D(2^kp
\beta
||p)
=
D\paren{
\frac{
\beta
}n
\Big\Vert
\frac{1}{2^kn}
}
\\
&=&
\frac{
(n-
\beta
) \log \left(\frac{2^k (n-
\beta
)}{2^k n-1}\right)
+
\beta
\log \left(2^k 
\beta
\right)}{n}
\\
&\ge&
\frac{
(n-\beta) \log \left(\frac{n-\beta}{ n}\right)
+
\beta \log \left(2^k \beta\right)
}{n},
\eeq
since
neglecting the $-1/2^k$ additive term in the denominator
decreases the expression.
Let $E$ be the event that {\em any} of the $p_i$s in any of the segments $k=1,2,\ldots$ has a corresponding $\hat p_i$ that 
exceeds $\beta/n$.
Then
\beq
\P(E) &\le&
\sum_{k=1}^\infty
2^kn\exp\paren{
-
(n-\beta) \log \left(\frac{n-\beta}{ n}\right)
-
\beta \log \left(2^k \beta\right)
}
=
\frac{2 \beta^{-\beta} n \left(\frac{n-\beta}{n}\right)^{\beta-n}}{2^\beta-2}
.
\eeq

For the choice $\beta=\log n$,
we have
\beqn
\label{eq:beta-logn}
\P(E)\le
\frac{2 \beta^{-\beta} n \left(\frac{n-\beta}{n}\right)^{\beta-n}}{2^\beta-2}
&\le&
\frac{81}{n},
\qquad
n\ge10,
\eeqn
which is proved in Proposition \ref{prop:betan}.
Now $E$ is the event that
$
\sup_{i:p_i<1/n}
(\hat p_i-p_i)
\ge\log(n)/n
$.
Since $p_i<1/n$, there is no need
to consider the left-tail deviation
at this scale, as all of the probabilities
will be zero.
Combining \eqref{eq:E-heavy} with \eqref{eq:beta-logn} yields 
\eqref{eq:UB1}. Since Lemma ~\ref{lem:vsubgauss} implies that
$V^*\le\phi(v^*)$,
\eqref{eq:UB2} follows from \eqref{eq:UB1}.
Finally, \eqref{eq:UB3}
follows from \eqref{eq:UB1} via the obvious relation
$V^*\le \log(n+1)v^*$.

\subsection{A Proof for Theorem \ref{thm:aryeh-ub-emp}}\label{proof4}

We begin with an elementary observation:
for 
$N\in\N$
and
$a,b\in[0,1]^N$,
we have
\beq
\abs{
\max_{i\in[N]}
a_i(1-a_i)
-
\max_{i\in[N]}
b_i(1-b_i)
}
&\le&
\max_{i\in[N]}\abs{
a_i
-
b_i
},
\eeq
and this also carries over to
$a,b\in[0,1]^\N$.
Let us denote
$v^*:=\sup_{i\in\N}p_i(1-p_i)$
and
$\hat v^*:=\sup_{i\in\N}\hat p_i(1-\hat p_i)$. Together with \eqref{eq:UB3}, this implies
\beq
\abs{v^*-\hat v^*}
\le
\nrm{p-\hat p}_\infty
\le
a
+
b\sqrt{v^*}
\eeq
where 
\beq
a &=& \frac{4}{3n}\log\frac{2(n+1)}\delta+\frac{\log n}n,
\\
b &=&
2\sqrt{\frac{\log (n+1)}{n}+\frac{1}{n}\log\frac2\delta}
.
\eeq
Following the proof of Lemma $5$ in \cite{DBLP:conf/icml/DasguptaH08},
\beq
|v^*-\hat v^*|  &\le& a + b\sqrt{v^*}
\\
&\le& a + b\sqrt{\hat v^*+|v^*-\hat v^*|}
\\
&\le& a + b\sqrt{\hat v^*}+b\sqrt{|v^*-\hat v^*|},
\eeq
where we used $v^*\le \hat v^*+|v^*-\hat v^*|$
and $\sqrt{x+y}\le\sqrt{x}+\sqrt{y}$.
Now we have an expression of the form
\beq
A\le B\sqrt{A}+C,
\eeq
where
$A=
|v^*-\hat v^*|
$,
$B=b$,
$C=
a + b\sqrt{\hat v^*}
$, which implies
$A\le B^2+B\sqrt{C}+C$,
or
\beq
|v^*-\hat v^*|
&\le&
b^2+a + b\sqrt{\hat v^*}
+b\sqrt{a + b\sqrt{\hat v^*}}.
\eeq
Using
$\sqrt{x+y}\le\sqrt{x}+\sqrt{y}$
and
$\sqrt{xy}\le(x+y)/2$,
\beq
|v^*-\hat v^*|
&\le&
b^2+a + b\sqrt{\hat v^*}
+b\sqrt{a} + b\sqrt{b\sqrt{\hat v^*}}
\\&\le&
b^2+a + b\sqrt{\hat v^*}
+b\sqrt{a} + 
b(b+\sqrt{\hat v^*})/2
\\
&=&
a+3b^2/2
+b\sqrt{a}
+3b\sqrt{\hat v^*}/2.
\eeq
We still have
\beq
a + b\sqrt{v^*}
\le
a+3b^2/2
+b\sqrt{a}
+3b\sqrt{\hat v^*}/2,
\eeq
whence, with probability $1-\delta$,
\beqn
\nrm{p-\hat p}_\infty
\le
a+3b^2/2
+b\sqrt{a}
+3b\sqrt{\hat v^*}/2.
\eeqn

\subsection{Proof of Proposition \ref{prop:fano}}\label{proof5}

\begin{proof}

The necessity 
of an additive term of order 
$\log(n)/(n\log\log n)$
can be intuited via a balls-in-bins analysis:
If $n$ balls are uniformly thrown into $n$ bins,
we expect a maximal load of about $\log(n)/(n\log\log n)$ balls
in one of the bins \citep{DBLP:conf/random/RaabS98}.
We proceed to formalize this intuition.

Let $\mu_1,\ldots,\mu_n$ be probability
measures on $\N$ with support contained in $[n]$, defined as follows.
For $a:=
\log(n)/(2n\log\log n)
$
and $b:=(1-a)/(n-1)$,
we define,
for $i\in[n]$,
$\mu_i(i)=a$,
for $j\neq i$,
$\mu_i(j)=b$.
For each $i\in[n]$,
define the probability $\nu_i$
on $\N^n$ as the $n$-fold product measure
$\mu_i^n$. It follows from Proposition~\ref{prop:pqdist2}
that
\beq
\frac{
D(\nu_j||\nu_{k})
}{\log n}
=
\frac{
n D(\mu_j||\mu_{k})
}{\log n}
\ninf
\frac12
\eeq
and
$
\nrm{\mu_j-\mu_k}_\infty\ge\alpha:= 
c\log(n)/(n\log\log n)
$
for $j\neq k$ and $n$ sufficiently large.
Invoking Lemma~\ref{lem:fano} with 
$r=n$, 
$\theta(\nu_j) = \mu_j$, 
$\rho = \nrm{\cdot}_\infty$,
and
$\beta=\frac12\log n+o(\log n)$
completes the proof.
\end{proof}

\subsection{Proof of Proposition \ref{prop:decoup-lb}}\label{proof6}
The analysis relies on 
a result of \cite[Theorem 2]{CohenK23}.
Let $X_i\sim\Bin(n,p_i)$, $i\in\N$
be a sequence of independent 
binomials with $1/2\ge p_1\ge p_2\ge\ldots$
and define $Y_i:=n\inv X_i-p_i$.
Then \cite{CohenK23} showed\footnote{
The theorem therein claimed this for 
$\E\sup_{i\in\N}|Y_i|$
but in fact the proof shows this for
$\E\sup_{i\in\N}Y_i$.
} that
\beqn
\label{eq:ck}
c\sqrt{S} \le 
\liminf_{n\to\infty}
\sqrt n
\E\sup_{i\in\N}Y_i
,
\eeqn
where 
$
S:=\sup_{i\in\N}p_i\log(i+1)
$
and
$c>0$ is an absolute constant.

Since 
$
t\le 2t(1-t)
$
for
$t\in[0,1/2]$
and $p_1\le1/2$
(as per Remark~\ref{rem:p1/2}),
we have that $V^*\ge S/2$.
However, 
the
\cite{CohenK23} 
lower bound
is not immediately
applicable to our case, because 
\eqref{eq:ck} 
requires the binomials to be independent.
Fortunately, their dependence is of the 
{\em negative association}
type
\cite[Theorem 14]{Dubhashi:1998:BBS:299633.299634},
which futher implies
negative right orthant dependence
(Proposition 5, ibid.).
Finally,
\cite[Proposition 4]{kontorovich2023decoupling}
shows that
\beqn
\label{eq:kon-decoup}
\E\sup_{i\in\N}Y_i
&\ge&
\frac12\E\sup_{i\in\N}\tilde Y_i,
\eeqn
where the $\tilde Y_i$ are mutually independent and each
one is distributed identically to its corresponding
$Y_i$.
This  completes the proof.

\begin{remark}
   
The lower bound is
only asymptotic (rather than {\em finite-sample}, in the sense of holding for all $n$)
--- necessarily so. 
This is because even for a single binomial $Y\sim \Bin(n,p)$, the behavior of 
$\E|Y-np|$ is roughly $np(1-p)$
for $p\notin[1/n,1-1/n]$ and $\approx \sqrt{np(1-p)}$ elsewhere 
\cite[Theorem 1]{Berend2013}.
This precludes any
finite-sample
lower bound of the form 
$\E\nrm{p-\hat p}_\infty \ge c\sqrt{V(p)/n}$.
\end{remark}

\subsection{Proof of Proposition \ref{prop:p_0-lb}}\label{proof7}
It follows 
from Lemma~\ref{lem:bern-np}
that $\E|p-\hat p|\ge c\sqrt{p_0/n}$
for some universal $c>0$. Since
$
\E\sup_{i\in\N}|p_i-\hat p_i|
\ge
\sup_{i\in\N}\E|p_i-\hat p_i|
$, 
this  completes the proof.

\subsection{ A Proof for Theorem \ref{selective_inference_LB}}\label{proof8}

We begin with the following proposition.
\begin{proposition}\label{te}
     Assume there exists $V_\delta(X^n)$ such that
    \begin{align}\label{most_frequent_2}
        \P\left(|p_j-\hat{p}_j|\geq V_\delta(X^n) |p_j=p_{[1]} \right) \leq \delta.
    \end{align}
    Then, 
$$\mathbb{E}(V_\delta(X^n))\geq z_{\delta/2}\sqrt{\frac{p_{[1]}(1-p_{[1]})}{n}}+O\left(\frac{1}{n}\right).$$
\end{proposition}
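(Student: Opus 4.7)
The plan is to recognize $V_\delta(X^n)$ as the (random) half-width of a $(1-\delta)$-confidence interval for the Bernoulli parameter $p_{[1]}$, and then invoke the classical asymptotic lower bound on the expected width of any such interval.

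First, conditionally on $p_j = p_{[1]}$, the count $n\hat p_j$ is $\mathrm{Bin}(n,p_{[1]})$, so by the Berry--Esseen theorem, uniformly in $t$,
\[
\P\bigl(|\hat p_j - p_{[1]}| \le t\bigr) = 2\Phi(t/\sigma_n) - 1 + O(n^{-1/2}),
\qquad \sigma_n := \sqrt{p_{[1]}(1-p_{[1]})/n}.
\]
Because the normal density at $\pm z_{\delta/2}\sigma_n$ is of order $\sigma_n^{-1} = \Theta(\sqrt n)$, this CDF-level error of $O(n^{-1/2})$ translates, upon inversion, into a \emph{quantile} error of order $O(1/n)$; hence the $(1-\delta)$-quantile of $|\hat p_j - p_{[1]}|$ equals $z_{\delta/2}\sigma_n + O(1/n)$.

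Next, from $\P(|\hat p_j - p_{[1]}| \le V_\delta) \ge 1-\delta$ I conclude that $[\hat p_j - V_\delta,\,\hat p_j + V_\delta]$ is a $(1-\delta)$-confidence interval for $p_{[1]}$ of expected length $2\mathbb E[V_\delta]$. I would then apply the standard local-asymptotic-minimax bound for $(1-\delta)$-CIs of a Bernoulli parameter, proved via Pratt's identity $2\mathbb E[V_\delta] = \int_0^1 \P(\theta \in [\hat p_j - V_\delta,\,\hat p_j + V_\delta])\, d\theta$ combined with a likelihood-ratio comparison across the continuum of nearby parameters $\theta = p_{[1]} + u\sigma_n$. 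In the Gaussian limit this yields $\mathbb E[V_\delta] \ge z_{\delta/2}\sigma_n$, and combining with the $O(1/n)$ Berry--Esseen remainder from the first step produces the claim.

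The hardest part will be obtaining the sharp constant $z_{\delta/2}$ rather than a suboptimal one. A direct two-point Le~Cam + Pinsker calculation --- perturbing $p_j$ by $\epsilon$ and controlling $\mathrm{TV}(P^n, P'^n)$ --- recovers only a bound of the form $c(\delta)\sigma_n$ with $c(\delta) = (1-2\delta)^2/4$, which is much smaller than $z_{\delta/2}$ when $\delta$ is small. Extracting the tight Gaussian constant requires averaging over a continuum of perturbations (the LAM / convolution-theorem machinery applied to the Bernoulli family at $p_{[1]}$) rather than a single alternative, which is the delicate core of the argument.
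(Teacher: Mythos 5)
Your overall reduction --- read $V_\delta$ as the half-width of a $(1-\delta)$ confidence interval for the single Bernoulli parameter $p_{[1]}$ and invoke an optimality statement for such intervals --- is the same one the paper uses, but you discharge that optimality statement very differently. The paper embeds a $\Bin(n,p_{[1]})$ sample into a multinomial sample (by randomly redistributing the non-$j$ outcomes over the remaining symbols), feeds the result to $V_\delta$, and then cites the optimality of the Clopper--Pearson interval together with the expansion $z_{\delta/2}\sqrt{p_{[1]}(1-p_{[1]})/n}+O(1/n)$ of its expected half-length; the entire analytic burden is outsourced to that citation. You instead propose to prove the Bernoulli lower bound from scratch via Pratt's identity plus a Neyman--Pearson / local-asymptotic-minimax comparison over a continuum of alternatives, with Berry--Esseen controlling the remainder. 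If carried out, your route is more self-contained and arguably on firmer ground than the paper's (Clopper--Pearson is conservative rather than length-optimal, so ``contradicts its optimality'' is doing unexamined work there), and you correctly diagnose that a single two-point Le~Cam comparison cannot produce the sharp constant $z_{\delta/2}$.

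That said, two concrete gaps remain. First, the Berry--Esseen quantile-inversion step only lower-bounds a \emph{deterministic} $V_\delta$ by the $(1-\delta)$-quantile of $|\hat p_j-p_{[1]}|$; for a data-dependent $V_\delta$ it proves nothing (a width tracking $|\hat p_j-p_{[1]}|$ itself would cover always yet have expectation $\approx\sqrt{2/\pi}\,\sigma_n<z_{\delta/2}\sigma_n$ for small $\delta$, where $\sigma_n:=\sqrt{p_{[1]}(1-p_{[1]})/n}$), so the entire weight rests on the Pratt/LAM step, which you state but do not execute --- and which, as you yourself note, is the delicate core. Second, $V_\delta$ is a function of the full multinomial sample $X^n$, not of the count $n\hat p_j$ alone, and the alternatives $\theta'=p_{[1]}+u\sigma_n$ must be realized as genuine multinomial perturbations (renormalizing the other masses); the likelihood-ratio comparison has to be run in that family rather than in a literal two-outcome model. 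This is exactly the direction-of-reduction issue that the paper's randomization construction exists to handle. (Both your argument and the paper's also quietly upgrade the hypothesis, which asserts coverage only at the single true $p$, to coverage over a neighborhood of alternatives --- without that, superefficiency at the point $p_{[1]}$ defeats any such lower bound --- so this last issue is shared rather than specific to your proposal.)
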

\begin{proof}
Assume there exists $V_\delta(X^n)$ that satisfies (\ref{most_frequent_2}) and 
    $$\mathbb{E}(V_\delta(X^n))< z_{\delta/2}\sqrt{\frac{p_{[1]}(1-p_{[1]})}{n}}+O\left(\frac{1}{n}\right).$$
From (\ref{most_frequent_2}), we have that 
\begin{align}\label{cond}
     \P\left(|p_j-\hat{p}_j|\geq U_\delta(X^n) |p_j=p_{[1]} \right)=\P\left(|p_{[1]}-\hat{p}_j|\geq U_\delta(X^n) |p_j=p_{[1]} \right)\leq \delta.
\end{align}
Now, consider $Y\sim \text{Bin}(n,p_{[1]})$. Let $Y^n$ be a sample of $n$ independent observations. Notice we can always extend the Binomial case to a multinomial setup with parameters $p$, over any alphabet size $||p||_0$. That is, given a sample $Y^n$, we may replace every $Y=0 $ (or $Y=1$) with a sample from a multinomial distribution over an alphabet size $||p||_0-1$. Further, we may focus on samples for which $p_{[1]}$ is the most likely event in the alphabet, and construct a CI for $p_{[1]}$ following (\ref{cond}). This means that we found a CI for $p_{[1]}$ with an expected length that is shorter than the CP CI, which contradicts its optimality.  
 
\end{proof}
Now, assume there exists $U_\delta(X^n)$ that satisfies 
\begin{align}
        \P\left(|p_j-\hat{p}_j|\geq U_\delta(X^n)  \right) \leq \delta.
    \end{align}
and 
\begin{align}\label{temp}
\mathbb{E}(U_\delta(X^n))< z_{\delta/2}\sqrt{\frac{p_{[1]}(1-p_{[1]})}{n}}+O\left(\frac{1}{n}\right).
\end{align}
For simplicity of notation, denote $v=\argmax_i p_i$ as the symbol with the greatest probability in the alphabet. That is, $p_v=p_{[1]}$. We implicitly assume that $v$ is unique, although the proof holds in case of several maxima as well. We have that
\begin{align}\label{te2}
    \P\left(|p_j-\hat{p}_j|\geq U_\delta(X^n)  \right)=&\sum_{u\in \mathcal{X}}\P\left(|p_j-\hat{p}_j|\geq U_\delta(X^n)  |j=u\right)\P(j=u)=\\\nonumber
    & \P\left(|p_{[1]}-\hat{p}_j|\geq U_\delta(X^n)  |j=v\right)\P(j=v)+\\\nonumber
    &\sum_{u \neq v}\P\left(|p_j-\hat{p}_j|\geq U_\delta(X^n)  |j=u\right)\P(j=u).
\end{align}
Proposition \ref{te} together with  assumption (\ref{temp}) imply that $$\P\left(|p_{[1]}-\hat{p}_j|\geq U_\delta(X^n)  |j=v\right)>\delta.$$ On the other hand, it is well-known that $\hat{p}_{[1]} \rightarrow p_{[1]}$ for sufficiently large $n$ \citep{gelfand1992inference,shifeng2005testing,xiong2009inference}. This means that $\P(j=u)\rightarrow 1$ and (\ref{te2}) is bounded from below by $\delta$, for sufficiently large $n$. This contradicts (\ref{cond}) as desired.

\section*{Appendix A}

We show that 
$$\sup_{p \in [0,1-1/n]} \big|\left(p(1-p)\right)^k-\left((p+1/n)(1-(p+1/n))\right)^k)\big|\leq \frac{k}{n\cdot 4^{k-1}}+\frac{3k(k-1)(k-2)}{n^3\cdot 2^{2k-5}}
$$
Let $0\leq p \leq 1/2-1/n$. Denote $f_k(p)=\left((p(1-p)\right)^k$. Applying Taylor series to $f_k(p+1/n)$ around $f_k(p)$ yields
$$f_k\left(p+\frac{1}{n}\right)=f_k(p)+\frac{1}{n}f'_k(p)+r(p)$$
where $r(p)=\frac{1}{3!}\frac{1}{n^3}f'''(c)$ is the residual and $c \in [p, p+1/n]$ \citep{stromberg2015introduction}. 
We have
\begin{align}\label{derivatives}
    &f'_k(p)=k\left(p(1-p)\right)^{k-1}(1-2p)\leq k \left(p(1-p)\right)^{k-1}\\\nonumber
    &f'''_k(p)=k(k-1)(k-2)p^{k-3}(1-p)^{k-3}(1-2p)^3-6k(k-1)p^{k-2}(1-p)^{k-2}(1-2p)\leq\\\nonumber
    &\quad \quad \quad \;\;\;  k(k-1)p^{k-3}(1-p)^{k-3}\left((k-2)+6p(1-p)\right).
\end{align}
Hence,
\begin{align}
&\sup_{p \in [0,1/2-1/n]} \big|\left(p(1-p)\right)^k-\left((p+1/n)(1-(p+1/n))\right)^k)\big|=\\\nonumber
&\sup_{p \in [0,1/2-1/n]}\big|-\frac{1}{n}f_k'(p)-\frac{1}{3!}\frac{1}{n^3}f'''(c)\big|\leq\sup_{p \in [0,1/2-1/n]}\frac{1}{n}\big|f_k'(p)\big|+\frac{1}{3!}\frac{1}{n^3}\big|f'''(c)\big|\overset{(\text{i})}{\leq}\\\nonumber
&\sup_{p \in [0,1/2-1/n]}\frac{k}{n}\left(p(1-p)\right)^{k-1}+k(k-1)p^{k-3}(1-p)^{k-3}\left((k-2)+6p(1-p)\right)\overset{(\text{ii})}{\leq}\\\nonumber
&\quad\quad\quad\quad\;\;\frac{k}{n\cdot 4^{k-1}}+\frac{3k(k-1)(k-2)}{n^3\cdot 2^{2k-5}}
\end{align}
where 
\begin{enumerate}
    \item follows from (\ref{derivatives}).
    \item follows from the concavity of $\left(p(1-p)\right)^{k}$ for $k\geq 1$. 
\end{enumerate}

\section*{Appendix B}
We study $\min_m m/a^{1/m}$ for some positive $a$. This problem is equivalent to 
$$\min_m \log(m)-\frac{1}{m}\log(a).$$
Taking its derivative with respect to $m$ and setting it to zero yields
$$\frac{d}{dm}\log(m)-\frac{1}{m}\log(a)=\frac{1}{m}+\frac{1}{m^2}\log(a)=0.$$
Hence, $m^*=\log(1/a)$. Therefore, 
\begin{align}
\min_m m/a^{1/m}=&\exp(\log(m^*)-({1}/{m^*})\log(a))=\exp(1)\log(1/a).
\end{align}

\section*{Appendix C}
We study 
    \begin{align}
        \min_{m \in \mathbb{R}^+} \left(\frac{\sqrt{m/2}}{\delta^{1/m}}\right)\exp\left(-\frac{1}{2}+\frac{1}{m}\right) 
    \end{align}
This problem is equivalent to 
    \begin{align}
        \min_{d \in \mathbb{R}^+} 
        \frac{1}{2}\log(d)+\frac{1}{2d}\log\left(\frac{1}{\delta}\right)-\frac{1}{2}+\frac{1}{2d}
    \end{align}
where $d=m/2$. Taking its derivative with respect to $d$ and setting it to zero yields
$$\frac{1}{2d}-\frac{1}{2d^2}\left(\log\left(\frac{1}{\delta}\right)+1\right)  =0.$$
Hence, $d^*=\log(1/
\delta)+1$. Therefore, 
    \begin{align}
        \min_{d \in \mathbb{R}^+} 
        \frac{1}{2}\log(d)+\frac{1}{2d}\log\left(\frac{1}{\delta}\right)-\frac{1}{2}+\frac{1}{2d}=\frac{1}{2}\log(\log(1/
\delta)+1)
    \end{align}
and
 \begin{align}
        \min_{m \in \mathbb{R}^+} \left(\frac{\sqrt{m/2}}{\delta^{1/m}}\right)\exp\left(-\frac{1}{2}+\frac{1}{m}\right)=\sqrt{\log\left(\frac{1}{\delta}\right)+1}. 
    \end{align}

\section*{Appendix D}
\begin{proposition}
    Let $p_{i \in \mathbb{N}}$ be a probability distribution over $\mathbb{N}$. Then, 
    \begin{align}\label{A22}
        p_{[1]}=\max_{i\in \mathbb{N} }p_i(1-p_i)
    \end{align} where $p_{[1]}=\max_{i \in \mathbb{N}}p_i$ is the largest element in $p$. 
\end{proposition}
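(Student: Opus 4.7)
The plan is a short calculation that reduces the identity to the fact that any two masses in a probability distribution sum to at most one. Specifically, I would prove the general monotonicity statement that for any two indices $i,j \in \mathbb{N}$, if $p_i \geq p_j$ then $p_i(1-p_i) \geq p_j(1-p_j)$. Once this is established, specializing to $p_i = p_{[1]}$ and letting $j$ range over all other indices immediately yields $\max_{k \in \mathbb{N}} p_k(1-p_k) = p_{[1]}(1-p_{[1]})$, which is the content of the proposition (modulo the evident notational shorthand on the left-hand side).

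The algebraic heart of the argument is the factorization
\[
p_i(1-p_i) - p_j(1-p_j) = (p_i - p_j) - (p_i^2 - p_j^2) = (p_i - p_j)\bigl(1 - (p_i + p_j)\bigr).
\]
The first factor is non-negative by the ordering assumption $p_i \geq p_j$, so the sign of the difference is controlled entirely by the second factor. Here I would invoke the only probabilistic input needed: since $p_i$ and $p_j$ are two distinct entries of a probability distribution on $\mathbb{N}$, they satisfy $p_i + p_j \leq \sum_{k \in \mathbb{N}} p_k = 1$, so $1 - p_i - p_j \geq 0$ and the difference is non-negative, as required. Note that this step is what makes the usual warning about $p \mapsto p(1-p)$ being non-monotone on $[0,1]$ irrelevant: the probability constraint rules out the only configurations in which monotonicity would fail.

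There is essentially no obstacle to overcome; the proof collapses to one identity and one elementary observation. The only mildly delicate point is the existence of a largest mass $p_{[1]}$, which is implicit in the statement: if the supremum is not attained one would replace $\max$ by $\sup$ throughout and apply the same pairwise comparison along a maximizing sequence, with no change in the argument.
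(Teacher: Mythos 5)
Your proof is correct, and it takes a genuinely different route from the paper's. The paper argues by cases: if every $p_i \le 1/2$, the claim follows from the monotonicity of $t\mapsto t(1-t)$ on $[0,1/2]$; if some (necessarily unique) $p_j = 1/2+a$ exceeds $1/2$, the remaining masses are each at most $1/2-a$, so by the same monotonicity together with the symmetry of $t(1-t)$ about $t=1/2$ their values $p_i(1-p_i)$ are bounded by $(1/2-a)(1/2+a) = p_j(1-p_j)$. Your single factorization $p_i(1-p_i)-p_j(1-p_j) = (p_i-p_j)\bigl(1-(p_i+p_j)\bigr)$, combined with $p_i+p_j\le 1$ for distinct indices, subsumes both cases at once, avoids the case split entirely, and makes the only probabilistic input explicit; it also yields the slightly stronger pairwise statement that $p_i \ge p_j$ implies $p_i(1-p_i)\ge p_j(1-p_j)$, rather than only a statement about the maximizer. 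Both arguments read the displayed identity in the same charitable way, namely as $\max_{i}p_i(1-p_i) = p_{[1]}\bigl(1-p_{[1]}\bigr)$. One small remark: your closing caveat about the supremum possibly not being attained is unnecessary, since the $p_i$ are summable and hence tend to $0$, so $\sup_i p_i$ is always attained for a nontrivial distribution on $\mathbb{N}$ --- but the fallback you describe would work and the remark does no harm.
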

\begin{proof}
Let us first consider the case where $p_i\leq 1/2$ for all $i \in \mathbb{N}$. Then (\ref{A22}) follows directly from the montonicity of $p_i(1-p_i)$ for $p_i \in [0,1/2]$.  
Next, assume there exists a single $p_j>1/2$. Specifically, $p_j=1/2+a$ for some positive $a$. Then, the remaining $p_i$'s are necessarily smaller than $1/2$. Further, the maximum of $p_i(1-p_i)$ over $i\neq j$ is obtained for $p_i=1/2-a$, from the same monotonicity reason. This means that $\max_{i \neq j }p_i(1-p_i)=(1/2-a)(1-(1/2-a))=(1/2+a)(1-(1/2+a))$ where the second equality follows from the symmetry of $p_i(1-p_i)$ around $p_i=1/2$, which concludes the proof. 
\end{proof}

\bibliographystyle{plainnat}
\bibliography{bibi}  
\end{document}


\section*{Appendix}
\begin{proposition}
Let $n\geq10$ and $\beta=\log(n)$. Then, 
$$f(n)=\frac{\beta^{-\beta} n^2 \left(\frac{n-\beta}{n}\right)^{\beta-n}}{2^\beta-2}\leq
\frac{81}{2}.$$
\end{proposition}
\begin{proof}
    To prove the above, we show that $f(n)$ is decreasing for $n>200$. This means that the maximum of $f(n)$ may be numerically evaluated in the range $n\in\{10,...,200\}$. Finally, we verify that the maximum of $f(n)$ is attained for $n=33$, and is bounded from above by $81/2$ as desired. 
    Let us now show that $f(n)$ is decreasing for $n>200$. Since $f(n)$ is non-negative, it is enough to show that $g(n)=\log f(n)$ is decreasing. Denote
    \begin{align}
    g(n)=-\beta\log \beta+2\log n +(n-\beta)\log(n-\beta)+(n-\beta)\log n -\log(2^\beta-2).
    \end{align}
    Taking the derivative of $g(n)$ we have,
    \begin{align}
        g'(n)=&-\frac{1}{n}(\log \beta+1)+\frac{2}{n}+\left(1-\frac{1}{n}\right)(-\log(n-\beta)-1+\log n) +\frac{n-\beta}{n}-\frac{1}{n}\frac{2^\beta\log 2}{2^\beta-2}=\\\nonumber
        &\frac{1}{n}\left((n-1)\log \frac{n}{n-\beta}-\log \beta -\beta +2 -\frac{2^\beta\log 2}{2^\beta-2}\right)\leq\\\nonumber
        &\frac{1}{n}\left(n\log \frac{n}{n-\beta}-\log \beta -\beta+2 -\log 2\right)\leq
        \frac{1}{n}\left(\frac{n\beta}{n-\beta}-\log \beta -\beta +2 -\log 2\right)=\\\nonumber
        &\frac{1}{n}\left(\frac{\beta^2}{n-\beta}-\log \beta  +2 -\log 2\right)
    \end{align}
    where the first inequality follows from $\log (n /(n-\beta))\geq 1$ and $2^\beta/(2^\beta-2)\geq 1$, while the second inequality is due to Bernoulli Inequality, $(n/(n-\beta))^n\leq \exp(n\beta/(n-\beta))$.  
    Finally, it is easy to show that $\beta^2/(n-\beta)$ is decreasing for $n\geq 10$. This means that $\beta^2/(n-\beta)\leq (\log 10)^2/(10-\log(10))$ and $g'(n)<0$ for $n>200$.
\end{proof}